\begin{document}

\newtheorem{theorem}{Theorem}
\newtheorem{proposition}{Proposition}
\newtheorem{lemma}{Lemma}
\newtheorem{corollary}{Corollary}
\newtheorem{definition}{Definition}
\newtheorem{remark}{Remark}
\newtheorem{remarks}{Remarks}
\newcommand{\tex}{\textstyle}
\numberwithin{equation}{section} \numberwithin{theorem}{section}
\numberwithin{proposition}{section} \numberwithin{lemma}{section}
\numberwithin{corollary}{section}
\numberwithin{definition}{section} \numberwithin{remark}{section}
\newcommand{\ren}{\mathbb{R}^N}
\newcommand{\re}{\mathbb{R}}
\newcommand{\dyle}{\displaystyle}
\newcommand{\n}{\nabla}
\newcommand{\p}{\partial}
\newcommand{\iy}{\infty}
\newcommand{\pa}{\partial}
\newcommand{\fp}{\noindent}
\newcommand{\ms}{\medskip\vskip-.1cm}
\newcommand{\mpb}{\medskip}
\newcommand{\AAA}{{\bf A}}
\newcommand{\BB}{{\bf B}}
\newcommand{\CC}{{\bf C}}
\newcommand{\DD}{{\bf D}}
\newcommand{\EE}{{\bf E}}
\newcommand{\FF}{{\bf F}}
\newcommand{\GG}{{\bf G}}
\newcommand{\oo}{{\mathbf \omega}}
\newcommand{\Am}{{\bf A}_{2m}}
\newcommand{\CCC}{{\mathbf  C}}
\newcommand{\II}{{\mathrm{Im}}\,}
\newcommand{\RR}{{\mathrm{Re}}\,}
\newcommand{\eee}{{\mathrm  e}}

\newcommand{\cN}{{\mathcal{N}}}

\newcommand{\LL}{L^2_\rho(\ren)}
\newcommand{\LLL}{L^2_{\rho^*}(\ren)}
\renewcommand{\a}{\alpha}
\renewcommand{\b}{\beta}
\newcommand{\g}{\gamma}
\newcommand{\G}{\Gamma}
\renewcommand{\d}{\delta}
\newcommand{\D}{\Delta}
\newcommand{\e}{\varepsilon}
\newcommand{\var}{\varphi}
\newcommand{\lll}{\l}
\renewcommand{\l}{\lambda}
\renewcommand{\o}{\omega}
\renewcommand{\O}{\Omega}
\newcommand{\s}{\sigma}
\renewcommand{\t}{\tau}
\renewcommand{\th}{\theta}
\newcommand{\z}{\zeta}
\newcommand{\wx}{\widetilde x}
\newcommand{\wt}{\widetilde t}
\newcommand{\noi}{\noindent}
\newcommand{\uu}{{\bf u}}
\newcommand{\xx}{{\bf x}}
\newcommand{\yy}{{\bf y}}
\newcommand{\zz}{{\bf z}}
\newcommand{\aaa}{{\bf a}}
\newcommand{\cc}{{\bf c}}
\newcommand{\jj}{{\bf j}}
\newcommand{\ggg}{{\bf g}}
\newcommand{\UU}{{\bf U}}
\newcommand{\YY}{{\bf Y}}
\newcommand{\HH}{{\bf H}}
\newcommand{\GGG}{{\bf G}}
\newcommand{\VV}{{\bf V}}
\newcommand{\ww}{{\bf w}}
\newcommand{\vv}{{\bf v}}
\newcommand{\hh}{{\bf h}}
\newcommand{\di}{{\rm div}\,}
\newcommand{\ii}{{\rm i}\,}
\def\I{{\rm Id}}
\newcommand{\inA}{\quad \mbox{in} \quad \ren \times \re_+}
\newcommand{\inB}{\quad \mbox{in} \quad}
\newcommand{\inC}{\quad \mbox{in} \quad \re \times \re_+}
\newcommand{\inD}{\quad \mbox{in} \quad \re}
\newcommand{\forA}{\quad \mbox{for} \quad}
\newcommand{\whereA}{,\quad \mbox{where} \quad}
\newcommand{\asA}{\quad \mbox{as} \quad}
\newcommand{\andA}{\quad \mbox{and} \quad}
\newcommand{\withA}{,\quad \mbox{with} \quad}
\newcommand{\orA}{,\quad \mbox{or} \quad}
\newcommand{\atA}{\quad \mbox{at} \quad}
\newcommand{\onA}{\quad \mbox{on} \quad}
\newcommand{\ef}{\eqref}
\newcommand{\mc}{\mathcal}
\newcommand{\mf}{\mathfrak}

\newcommand{\Ge}{\G_\e}
\newcommand{\Hn}{ H^{1}(\Rn)}
\newcommand{\Wn}{W^{1,2}(\Rn)}
\newcommand{\Wan}{W^{\frac{\a}{2},2}(\Rn)}
\newcommand{\Wa}{W^{\frac{\a}{2},2}(\R)}
\newcommand{\intn}{\int_{\Rn}}
\newcommand{\intR}{\int_\R}
\newcommand{\ie}{I_\e}
\newcommand{\nie}{\n \ie}
\newcommand{\gie}{I_\e'}
\newcommand{\ies}{I_\e''}
\newcommand{\ios}{I_0''}
\newcommand{\ip}{I'_0}

\newcommand{\zex}{z_{\e,\rho}}
\newcommand{\wex}{w_{\e,\xi}}
\newcommand{\zer}{z_{\e,\rho}}
\newcommand{\wer}{w_{\e,\rho}}
\newcommand{\dzex}{{\dot{z}}_{\e,\rho}}
\newcommand{\bE}{{\bf E}}
\newcommand{\bu}{{\bf u}}
\newcommand{\bv}{{\bf v}}
\newcommand{\bz}{{\bf z}}
\newcommand{\bw}{{\bf w}}
\newcommand{\bo}{{\bf 0}}
\newcommand{\bp}{{\bf \phi}}
\newcommand{\up}{\underline{\phi}}
\newcommand{\bh}{{\bf h}}

\newcommand{\ssk}{\smallskip}
\newcommand{\LongA}{\quad \Longrightarrow \quad}
\def\com#1{\fbox{\parbox{6in}{\texttt{#1}}}}
\def\N{{\mathbb N}}
\def\A{{\cal A}}
\newcommand{\de}{\,d}
\newcommand{\eps}{\varepsilon}
\newcommand{\be}{\begin{equation}}
\newcommand{\ee}{\end{equation}}
\newcommand{\spt}{{\mbox spt}}
\newcommand{\ind}{{\mbox ind}}
\newcommand{\supp}{{\mbox supp}}
\newcommand{\dip}{\displaystyle}
\newcommand{\prt}{\partial}
\renewcommand{\theequation}{\thesection.\arabic{equation}}
\renewcommand{\baselinestretch}{1.1}
\newcommand{\Dm}{(-\D)^m}

\newenvironment{pf}{\noindent{\it
Proof}.\enspace}{\rule{2mm}{2mm}\medskip}

\newcommand{\lapa}{(-\Delta)^{\alpha/2}}

\title
{\bf Existence of solutions for a system of coupled nonlinear
stationary bi-harmonic Schr\"{o}dinger equations}

\author{P.~\'Alvarez-Caudevilla, E.~Colorado, and V.~A.~Galaktionov}

\address{Departamento de Matem\'aticas, Universidad Carlos III de Madrid,
Av. Universidad 30,
28911 Legan\'es (Madrid), Spain}
\email{pacaudev@math.uc3m.es}

\address{Departamento de Matem\'aticas, Universidad Carlos III de Madrid,
Av. Universidad 30, 28911 Legan\'es (Madrid), Spain. And Instituto de Ciencias Matem\'aticas, (ICMAT, CSIC-UAM-UC3M-UCM), C/Nicol\'as Cabrera 15,
28049 Madrid, Spain.}
\email{ecolorad@math.uc3m.es, eduardo.colorado@icmat.es}

\address{Department of Mathematical Sciences, University of Bath,
 Bath BA2 7AY, UK}

\email{masvg@bath.ac.uk}


\thanks{The first and third authors have been partially supported by the Ministry of Economy and Competitiveness of
Spain under research project MTM2012-33258.}

\date{\today}

\maketitle
\begin{abstract}
We obtain existence and multiplicity results for the solutions of a class of coupled
semilinear bi-harmonic Schr\"{o}dinger equations. Actually,  using the classical
Mountain Pass Theorem and minimization techniques, we prove the existence of critical points of the associated functional constrained on the \emph{Nehari manifold}.

Furthermore, we show that using the so-called
\emph{fibering method} and the \emph{Lusternik-Schnirel'man theory} there exist infinitely
many solutions, actually a countable family of critical points, for such a semiliner bi-hamonic Schr\"{o}dinger system under
study in this work.
\end{abstract}

\noindent {\it \footnotesize 2010 Mathematics Subject Classification}. {\scriptsize 35G20, 35Q55, 35J50, 35B38.}\\
{\it \footnotesize Key words}. {\scriptsize Nonlinear Bi-Harmonic Schr\"odinger Equations, Standing Waves, Critical Point Theory.}

\section{Introduction}\label{sec:intro}
This work is devoted to the analysis of  solutions that solve the
following coupled nonlinear stationary bi-harmonic Schr\"{o}dinger
system (BNLSS)
\be
\label{s0} \left\{\begin{array}{ll}  \D^2 u_1
+\l_1 u_1= \mu_1 u_1^{2\s+1}  +\b |u_1|^{\s -1} u_1 |u_2|^{\s+1}
\ssk \\ \D^2 u_2 +\l_2 u_2= \mu_2 u_2^{2\s+1}  +\b |u_1|^{\s+1}
|u_2|^{\s-1}u_2\end{array}
\right. \ee where $\l_j,\mu_j>0$, $u_j\in W^{2,2}(\re^N)$ with $j=1,2$, $\b$ denotes a
real parameter and $x\in\re^N$, with $N=2,3$ (for physical
purposes).

To simplify the computations in this work we assume $\sigma=1$, hence we will study the system
\be
\label{s1}
\left\{\begin{array}{ll} \D^2 u_1 +\l_1 u_1= \mu_1 u_1^{3} +\b   u_2^{2}u_1
 \ssk
\\ \D^2 u_2 +\l_2 u_2= \mu_2 u_2^{3} +\b u_1^{2}u_2\end{array} \right. \ee
which has been
analyzed in the context of stability of solitons in magnetic
materials when effective quasi-particle mass becomes infinite.
Moreover, note that  system \eqref{s1} appears after assuming the
bi-harmonic nonlinear Schr\"{o}dinger equation (BNLSE) of the form
\be
\label{bse}
iW_t -\D^2 W+\b |W|^{2\s} W=0,
\ee
where $i$ denotes the imaginary unit. Then, if $W$ is the sum of two right and left-hand polarized waves $a_1 W_1$ and
$a_2 W_2$, where $a_1,a_2\in \re$, the preceding equation \eqref{bse} provides us with the following
coupled nonlinear bi-harmonic Schr\"{o}dinger system
\be
\label{s2}
\left\{\begin{aligned} &  iW_{1,t} - \D^2 W_1+  |a_1 W_1 + a_2 W_2|^{2\s} W_1= 0\\ &
 iW_{2,t} - \D^2 W_2+ |a_1 W_1 + a_2 W_2|^{2\s} W_2= 0\end{aligned} \right.
\ee where $W_{j,t}=\frac{\p W_j}{\p t}$, $j=1,2$. For this problem
we look for standing waves or finite-energy waveguide solutions of
the form $$W_j(t,x)=e^{i\l_j t} u_j(x),\quad j=1, \,2,
  $$
  where
$\l_j>0$ and $u_i$ are real value functions, which solve the system
\eqref{s1}. Rearranging terms in \eqref{s2} one can easily see that $u_j$ solve  the stationary system \eqref{s1}.

Problem \eqref{s1} is the \emph{bi-harmonic} version of a similar
one studied, among others, in \cite{ac,ac2,mmp,linwei,sirakov}
where a non-linear system of coupled nonlinear Schr\"odinger equations (NLSE)
of the form
\be
\label{nls}
\left\{\begin{array}{ll} -\D u_1 +\l_1 u_1= \mu_1 u_1^3 +\b  u_2^2u_1\\
-\D u_2 +\l_2 u_2= \mu_2 u_2^3 +\b u_1^2u_2 \end{array} \right.
\ee
with direct applications to nonlinear optics, Bose-Einstein condensates, etc, was considered. See also \cite{acr} where a linearly coupled system was considered and note that in \cite{col} system \eqref{nls} was studied in the one-dimensional case dealing with the fractional Schr\"odinger operator $(-\D)^{s}+\,$Id, $\frac 14<s\le 1$.

Here, we assume that the solutions belong to the Sobolev space $E=W^{2,2}(\re^N)$, endowed  with the scalar product and norm
\be
\label{embso}
\tex{\left\langle u,v \right\rangle_j := \int_{\re^N} \D u \cdot \D v
+ \lambda_j\int_{\re^N} uv,\quad\|u\|_j^2=\left\langle u,u \right\rangle_j,\: j=1,2.}
\ee

Also, we define $\mathbb{E}=E\times E$, and the elements in $\mathbb{E}$ will be denoted
by $\bu=(u_1,u_2)$; as a norm in $\mathbb{E}$ we will take
$$\|\bu\|^2=\|u_1\|_1^2+\|u_2\|_2^2.$$

Moreover, we denote $H$ as the space of radially symmetric functions in $E$, and $\mathbb{H}=H\times H.$
For $u\in E$, respectively, $\bu\in \mathbb{E}$, we set
\begin{eqnarray}
\label{mainfun}
 I_j(u) &=& \tfrac 12 \int_{\mathbb{R}^N} \left(|\D
u|^2+\l_j u^2\right)\,{\mathrm d} x -\tfrac{1}{4}\,\mu_j
\int_{\mathbb{R}^N} u^{4}\,{\mathrm d} x,\\ F(\bu) &=&
\tfrac{1}{4}\,
\int_{\mathbb{R}^N} \left(\mu_1 u_1^{4} +\mu_2
u_2^{4}\right)\,{\mathrm d} x,\\ G(\bu)
 &=& G(u_1,u_2)= \tfrac 12
 \int_{\mathbb{R}^N} |u_1|^2|u_2|^2 \,{\mathrm d} x,
  \ssk
\\ \label{mainfun0}\mathcal{J}(\bu)
&=&\mathcal{J}(u_1,u_2)= I_1(u_1)+I_2(u_2) - \b\, G(u_1,u_2)\\ &=&
\tfrac 12 \|\bu \|^2- F(\bu) -\b\,G(\bu).
\end{eqnarray}
\begin{remark}\label{rem:compact}
We recall  a well known result  about continuous  Sobolev
embedding (see, for instance, \cite{adams,Lions-JFA82}),
\be
\label{continuous}
\tex{E \hookrightarrow L^{p}(\re^N),\quad\hbox{with}\quad
1\le p \le p^*,}
\ee
which are compact replacing $E$ by the radial subspace $H$ and  if in
 addition $2\le N$ and $p<p^*$ (see \cite{Lions-JFA82}). Besides, we recall here the definition of the critical exponent
\be
\label{sobo12} \tex{p^*= \frac {2N}{N-4} \mbox{ if } N \ge
5, \quad \hbox{and}\quad
     \quad p^*=\iy\,\,\,\mbox{for}\,\,\,N=1,\, 2,\, 3,\, 4.}
     \ee
\end{remark}

We observe that, by \eqref{continuous}, the functional $\mathcal{J}$
is well defined since $F$, $G$ make sense for  $4\leq
p^*\Leftrightarrow N\le 8$, moreover, for $2\le N< 8$ we have that $F, G$ are compact on $\mathbb{H}$. Furthermore, it is easy to prove that
the functional $\mathcal{J}$ associated to \eqref{s1} is $\mc{C}^1$.

\subsection{Main results}


We basically ascertain existence and multiplicity results for the system \eqref{s1}.
To do so, we say that $\bu=(u,v)\neq (0,0)=\bo$ is a non-trivial solution of \eqref{s1} if $\bu$ is a critical point of $\mathcal{J}$.

We now state the definitions and  differences between bound and ground states (non-trivial solutions).
     \begin{definition}
     \label{bou}
     {\rm $\bu\in \mathbb{E}$ is a} non-trivial bound state
      {\rm of the system \eqref{s1} if $\bu$ is a non-trivial critical point of the functional $\mc{J}$,
     i.e., $\mc{J}'(\bu)=0$. Moreover, a bound state $\bw$ such that its energy is minimal among all the non-trivial
     bound states, namely,
     $$\mc{J}(\bw )=\min\{\mc{J}(\bu)\,;\, \bu\in \mathbb{E} \setminus\{(0,0)\},\; \mc{J}'(\bu)=0\},$$
     is called} ground state {\rm for the system \eqref{s1}.}
     \end{definition}
Thus, we first analyse (Section \ref{Sec2})
the bi-harmonic non-linear Schr\"{o}dinger equation
\be
\label{exnls}
 \tex{ \D^2 u_\l +  \l u_\l - |u_\l |^{2\s} u_\l= 0,}
 \ee
with\footnote{Note that the results we are going to prove can be extended easily for  $\sigma <\frac{4}{4-N}$.} $\s =1$, with the associated functional
\begin{equation}
\label{funfb}
\mc{F}_\l(u):=\tfrac{1}{2} \int\limits_{\re^N} (|\Delta u|^2 + \l u^2)  -  \tfrac{1}{2\s+2} \int\limits_{\re^N}
    |u|^{2\s+2}.
\end{equation}
 For this equation we show the existence of the ground state
through an argument based on the so-called \emph{fibering method}. This is basically an alternative methodology
to the bifurcation theory or critical point theory in obtaining information about the number of solutions
for certain differential equations with a variational structure; see \cite{DraPoh,Poh0, PohFM} for several examples and further details.

For equation \eqref{exnls} the solutions have an exponential decay at infinity and using the topological method
due to Lusternik-Schnirel'man we ascertain that there exists an infinite  countable number of solutions.

Furthermore, in Section \ref{Sec3} we prove the existence of solutions or critical points for the bi-harmonic non-linear Schr\"{o}dinger system \eqref{s1}.
Thus,  in order to find critical points of the functional $\mc{J}$ we set
     \be
     \label{setfun}
     \tex{\Psi(\bu)=(\mc{J}'(\bu) \,| \, \bu )= \|\bu\|^2  -4 F(\bu)- 4\b G(\bu),}
     \ee
 Then, we show that when the parameter $\b$ is less than the minimum among the  two following Sobolev constants
\[ S_j^2  =  \dyle\inf_{\varphi\in E\setminus\{0\}}
\frac{\|\varphi\|_k^2}{\int_{\re^N} U_j^2\varphi^2},\mbox{ with  } k,j=1,\, 2,\quad k\neq j,
\]
 where $U_j$ are the ground states of the equations
\[\D^2 u +  \l_ju = \mu_ju^3,\]
such that system \eqref{s1} possesses two semi-trivial solutions
$$\bu_1=(U_1,0), \quad \bu_2=(0,U_2),$$
which are strict local minima. On the other hand if the coupling parameter $\b$ is bigger than the maximum value among those Sobolev constants,
i.e.,
\[\b>\max\{S_1^2,S_2^2\},\]
then the semi-trivial solutions are saddle points of the associated functional $\mc{J}$ denoted by \eqref{mainfun}, to the system \eqref{s1},
constrained on the \emph{Nehari manifold}
\be
\label{Neh}
\mc{N}:=\{ \bu\in \mathbb{H} \setminus\{\bo\}\,;\, \Psi(\bu)=0\}.
\ee
Consequently, we are able to prove that
when the functional satisfies the Mountain Pass geometry, if $\b <\min\{S_1^2,S_2^2\}$ then there exists a critical point $\bu^*$ such that
\[\mc{J}(\bu^*)>\max\{\mc{J}(\bu_1),\mc{J}(\bu_2)\}.\]
However, when $\b> \max\{S_1^2,S_2^2\}$ we ascertain that a global minimum $\widetilde{\bu}$ exists for the function $\mc{J}$ on the Nehari manifold $\mc{N}$, such that
$$\mathcal{J}(\widetilde{\bu})<\min\{\mathcal{J}(\bu_1) ,\mathcal{J}(\bu_2)\}.$$
It is not difficult to prove that $\cN$ is a natural restriction following the same kind
of arguments as in \cite{ac2}.  We include the computations for the reader's convenience.

 \begin{proposition}\label{pr:ac}
 $\bu \in \mathbb{H}$ is a non-trivial critical point of $\mathcal{J}$ if and only if
 $\bu\in \cN$ and is a constrained critical point of $\mathcal{J}$ on $\cN$.
 \end{proposition}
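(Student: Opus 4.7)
The plan is to prove the two implications separately, with the nontrivial direction relying on a Lagrange multiplier argument together with the 4-homogeneity of both $F$ and $G$.

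For the easy direction, I would observe that if $\bu \in \mathbb{H}\setminus\{\bo\}$ is a free critical point of $\mathcal{J}$, then $\mathcal{J}'(\bu) = 0$ and in particular $\Psi(\bu) = (\mathcal{J}'(\bu)\,|\,\bu) = 0$, so $\bu \in \mc{N}$; being a free critical point, it is trivially a constrained critical point on $\mc{N}$.

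For the converse, assume $\bu \in \mc{N}$ is a critical point of $\mathcal{J}$ restricted to $\mc{N}$. By the Lagrange multiplier rule there exists $\nu \in \re$ such that $\mathcal{J}'(\bu) = \nu\,\Psi'(\bu)$. Testing this identity against $\bu$ gives $(\mathcal{J}'(\bu)\,|\,\bu) = \nu\,(\Psi'(\bu)\,|\,\bu)$, and since $\bu \in \mc{N}$ the left-hand side vanishes. Hence it suffices to check that $(\Psi'(\bu)\,|\,\bu) \neq 0$. Using that $F$ and $G$ are $4$-homogeneous, one has $(F'(\bu)\,|\,\bu) = 4F(\bu)$ and $(G'(\bu)\,|\,\bu) = 4G(\bu)$, so
\begin{equation*}
(\Psi'(\bu)\,|\,\bu) = 2\|\bu\|^2 - 16\,F(\bu) - 16\,\b\,G(\bu).
\end{equation*}
Substituting $\|\bu\|^2 = 4F(\bu) + 4\b\,G(\bu)$ from the constraint $\Psi(\bu)=0$ yields $(\Psi'(\bu)\,|\,\bu) = -2\|\bu\|^2 < 0$, since $\bu \neq \bo$. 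Therefore $\nu = 0$, which forces $\mathcal{J}'(\bu) = 0$, and $\bu$ is a free critical point.

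The proof is essentially bookkeeping; there is no genuine obstacle. The only point requiring care is the homogeneity computation and the sign $(\Psi'(\bu)\,|\,\bu) = -2\|\bu\|^2$, which is exactly what guarantees that $\mc{N}$ is a \emph{natural constraint} and makes the Nehari reduction a well-posed variational setting for the subsequent Mountain Pass and Lusternik--Schnirel'man arguments.
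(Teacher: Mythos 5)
Your proof is correct and follows essentially the same route as the paper: both arguments hinge on the Lagrange multiplier rule together with the computation $(\Psi'(\bu)\,|\,\bu)=-2\|\bu\|^2<0$ on $\cN$, obtained from the $4$-homogeneity of $F$ and $G$ and the constraint $\|\bu\|^2=4F(\bu)+4\b G(\bu)$. The only cosmetic difference is that the paper first records the uniform bound $\|\bu\|^2\ge\rho>0$ on $\cN$ to justify that $\cN$ is a smooth codimension-one manifold (hence that the multiplier rule applies), a point your same sign computation also delivers since it shows $\Psi'(\bu)\neq 0$ there.
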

\begin{proof}
For any $\bv\in \mathbb{H}\setminus \{\bo\}$,  one has that $$
t\bv \in \cN\quad \Longleftrightarrow\quad \|\bv\|^2= t^2\left[
4F(\bv)+4\b G(\bv)\right]. $$ As a consequence, for all $\bv\in
\mathbb{H}\setminus \{\bo\}$, there exists a unique $t>0$ such
that $t\bv \in \cN$. Moreover, since $F,G$ are homogeneous of
degree $4$, there exists $\rho>0$ such that
\begin{equation}\label{eq:M1}
\|\bu\|^2\geq \rho,\qquad \forall\, \bu\in \cN.
\end{equation}
Furthermore, from \eqref{eq:M1} it follows that
 \begin{equation}\label{eq:M2}
 (\Psi'(\bu)\mid \bu)=-2\|\bu\|^2< 0,\qquad \forall\,\bu\in \cN.
\end{equation}
From \eqref{eq:M1} and \eqref{eq:M2}, we infer that $\cN$ is a
smooth complete manifold of co-dimension one in $\mathbb{E}$.
Moreover, if
 $\bu\in \cN$ is a critical point of $\mathcal{J}$ constrained on $\cN$,  there exists $\o\in \mathbb{R}$ such that
 $$
 \mathcal{J}'(\bu)= \omega \Psi'(\bu).
 $$
 Then one finds $\Psi(\bu)=( \mc{J}'(\bu)\mid \bu)=\omega (\Psi'(\bu)\mid \bu)$. Since
 $\Psi(\bu)=0$ $\forall\,\bu\in \cN$, while thanks to \eqref{eq:M2} it follows $(\Psi'(\bu)\mid \bu)<-2\rho<0$, we infer that
 $\o=0$ and, thus,  $ \mc{J}'(\bu)= 0$.
\end{proof}

\begin{remarks}\label{rem:PS}
\begin{enumerate}
\item
Note that due to  the definition of $\cN$,
 it follows that
\begin{equation}\label{eq:F}
\|\bu \|^2= 4 F(\bu) +4\b G(\bu).
\end{equation}
Substituting into $\mathcal{J}$, we get
\begin{equation}\label{eq:M3}
\mathcal{J}(\bu)=\tfrac{1}{4} \|\bu \|^2,\qquad \forall\,\bu\in \cN,
\end{equation}
or equivalently,
\begin{equation}\label{eq:M4}
\mathcal{J}(\bu)= F(\bu) +\b G(\bu),\qquad \forall\,\bu\in \cN.
\end{equation}
Then, \eqref{eq:M3} jointly with  \eqref{eq:M1} imply that there
exists $C>0$ such that
\begin{equation}\label{eq:bound}
\mathcal{J}(\bu)\geq C>0,\qquad  \forall\,\bu\in \cN.
\end{equation}
 As a consequence, the main relevant fact of working on the Nehari manifold is that
 $\mathcal{J}$ is bounded from below on $\cN$, so one can try to minimize on it.
\item Concerning  the Palais-Smale (PS) condition, it is not difficult to prove it by
 taking into account the compact embedding of $H$ into $L^p(\mathbb{R}^N)$ for any $1< p<p^*$ and $N\ge 2$;
  see Remark \ref{rem:compact} and Lemma \ref{lem:PS}.
\end{enumerate}
\end{remarks}
Moreover, for system \eqref{s1} we apply again the fibering method and the Lusternik-Schnirel'man analysis to show the existence of an infinite countable number of
solutions. Indeed, we prove through a fibering method argument that when the parameter
\[\b >-\sqrt{\mu_1\mu_2},\]
system \eqref{s1} possesses at least a ground state solution. Note that as far as we know this is the first time the fibering has been applied to
the analysis of systems.
Moreover, Lusternik-Schnirel'man theory provides us again with the
existence of infinitely many solutions for system \eqref{s1}.

The outline of the paper is: the analysis of the bi-harmonic equation \eqref{exnls} is analyzed in Section \ref{Sec2}. In Section \ref{Sec3}, we study the existence of critical points of the functional corresponding to the system \eqref{s1}. Finally, we prove the existence of infinitely many solutions for the previous system \eqref{s1} in Section \ref{Sec4}.

\section{Bi-harmonic non-linear Schr\"{o}dinger equation}\label{Sec2}


\noindent During the last decades many works have been orientated
to the analysis of nonlinear Schr\"{o}dinger equations (NLSE) (cf.
\cite{ac2}) of the form
\be
\label{nls22} \tex{iw_t +\D w+ |w|^{2} w=0,\quad w(0,x)=w_0(x)\in
W^{1,2}(\re^N),} \ee
especially because of its strong applications to
several areas of Physics, such as nonlinear optics and
Bose--Einstein condensates. In particular, for equation
\eqref{nls22} it is known that it possesses solutions which become
singular, i.e., blows-up in finite time. In other words, these
solutions exist in $W^{1,2}(\re^N)$ over a finite time  interval such that
$$\lim_{t\to T} \|w\|_{W^{1,2}}=\infty.$$ Equation \eqref{nls22} is
the canonical model for propagation of intense laser beams in bulk
medium with \emph{Kerr nonlinearity}. However, that equation can
be generalized with a general power-law nonlinearity by
 \be
\label{nls23}
\tex{iw_t +\D w+ |w|^{2\s} w=0,\quad w(0,x)=w_0(x)\in W^{1,2}(\re^N),}
\ee
whose analysis has been primarily focused on the existence of standing waves, their stability and the global or blow-up existence of solutions.
Indeed, depending on the dimension one can assure that when $\s N<2$ (subcritical NLSE) there is existence of solutions globally. However,
in the critical case, i.e., equation \eqref{nls22} with $\s N=2$ for $N=2$, and the supercritical case $\s N>2$ the solutions might blow-up and the
standing waves are unstable.

Furthermore, and important in our analysis there are some studies focusing on a
BNLSE of the form
\be
\label{bnls22}
\tex{iw_t -\D^2 w+ |w|^{2\s} w=0,\quad w(0,x)=w_0(x)\in E,}
\ee
similar to \eqref{bse}. See for instance \cite{BFM,FIP} where blow-up solutions are considered and others such as \cite{MXZ,Pau1,Pau2} where existence of global solutions
is analysed. In relation with the existence of solutions for the BNLSE equation \eqref{bnls22}, it was proved  in \cite{FIP} that
some of the properties and characteristics for the NLSE \eqref{nls23} can be extended to BNLSE of the form \eqref{bnls22},
with the obvious transformations. In particular, looking for standing wave solutions for \eqref{bnls22} of the form
\be
\label{standw}
w(t,x)=e^{i\l t} u_\l (x),
\ee
such that $u$ is a radially symmetric ground state solution satisfying the equation
\be
\label{radnls}
 \tex{ \D^2 u_\l +  \l u_\l - |u_\l |^{2\s} u_\l= 0,}
 \ee
 for which there exist non-trivial solutions (radially symmetric ground state solutions) if $\s <\frac{4}{4-N}$.

 Note that due to the expression of equation \eqref{bnls22} we are working on the \emph{focusing} BNLSE. For the NLSE \eqref{nls22} it is
said that the equation is \emph{focusing or defocusing} when the
diffraction and nonlinearity work one against or with each other.
Therefore, since the Laplacian is a negative operator this would
correspond to positive or negative nonlinearity. However, the
bi-Harmonic operator is positive so that \eqref{bnls22} is called
a \emph{focusing} BNLSE.

Moreover, depending on $\s$ and the dimension (see \cite[Section 4]{FIP}) one has that:
\begin{itemize}
\item If $\s N < 4$ the standing wave solutions \eqref{standw} are stable;
\item In the critical case $\s N=4$ there exist global solutions if the $L^2$-norm for the initial condition $w_0$ is bounded. This suggests that
\emph{a priori} bounds imply global existence;
\item However, the critical exponent/dimension $\s N=4$ creates singularity formation in the \emph{focusing} BNLSE \eqref{bnls22}.
\end{itemize}
Another very important aspect shown in \cite{FIP} about the BNLSE \eqref{bnls22} is that the solutions or critical points
of the associated functional, once they exist, oscillate.
On the contrary to what happens for the NLSE \eqref{nls22}
in which the ground state is positive and radially symmetric.
Indeed, the \emph{ground states} or non-trivial solutions of
minimal energy for the equation \eqref{radnls} are neither
positive nor monotonic.

It should be pointed out that even for bounded domains the issues concerning the
positivity of the first eigenfunction of a bi-harmonic operator are not straightforward. Indeed, there are only
partial results about this problem having the positivity of the first eigenfunction in a ball or for
certain perturbations of the bi-harmonic operators or of the domain; see \cite{HcS} for further details and references therein.

Also, using the two-scale WKBJ approximation method, it is possible
to approximate the solutions of the PDE \eqref{radnls} assuming
that the radially symmetric ground state can be approximated by
$u(r)=e^{g(r)}$.

Then, analysing the leading order terms one can
ascertain the asymptotic behaviour of the solutions.
Namely, scaling out the parameter $\l>0$
for  $u=u(r)$, with $r = |x| \ge 0$, so that
$$u(r)=\l^{-1/2\s} u(\l^{-1/4} r),$$
we obtain,
 \be
 \label{inf1}
  \tex{
 \D^2 u \equiv u^{(4)} + \frac{2(N-1)}{r} u'''
    + \frac{2(N-1)(N-3)}{r^2} u'' - \frac{(N-1)(N-3)}{r^3} u'=-u + |u|^{2\s}u\, ,
 }
 \ee
 with $\s<\frac{4}{4-N}$.
Next, using  a standard algebraic-exponential pattern of the form $ u(r) \approx r ^\d \,\eee^{a r}$ (as $r\to \infty$) in \eqref{inf1} leads easily to the
following characteristic equation
 \be
 \label{inf111}
  \tex{
a^4=-1 \andA \d= - \frac{N-1}2.
 }
 \ee
To be precise, note that the first equation in \eqref{inf111} comes from the homogeneity of the leading terms in \eqref{inf1}. The second equality in \eqref{inf111} comes from a similar argument after evaluating  the next leading terms on the left-hand side in \eqref{inf1}.  This yields a {\em two-dimensional} exponential bundle:
 \be
 \label{inf1N}
  \tex{
 u(r)\approx r^{-\frac{N-1}2} {\mathrm e}^{-r / \sqrt 2} \big[
C_1 \cos\big( \frac {r}{\sqrt 2} \big)+ C_2 \sin\big( \frac
{r}{\sqrt 2} \big)\big]\, ,}
 \ee
 where $C_{1}, C_2 \in \re$ are arbitrary parameters of this linearized
 bundle.


\subsection{Multiplicity results for the BNLSE \eqref{radnls}}


Now,  we perform an analysis that will provide us with an
estimation for the number of stationary solutions or standing waves
for the BNLSE \eqref{bnls22}. To this end, we will first use the
so-called {\em fibering method}, introduced by S.I.~Pohozaev in
the 1970s \cite{Poh0, PohFM}, as a convenient generalization of
previous versions by Clark and Rabinowitz \cite{Clark, Rabin}  of
variational approaches, and further developed   by Dr\'abek and
Pohozaev \cite{DraPoh} and others in the 1980's. This methodology
gives an alternative to other methods such as bifurcation theory
or critical point theory
in relation  to obtaining multiplicity results of differential equations with a variational structure.

We actually have the uniqueness of the ground states for the BNLSE \eqref{radnls}, although other solutions exist. Indeed, thanks to the
Lusternik-Schnirel'man theory we are able to prove the existence of a countable number of solutions.
However, a different story occurs for the
BNLSE \eqref{bnls22} for which this uniqueness is not true in general and, actually, very difficult to prove.
\vglue 0.2cm
\underline{Fibering Method.} Thus, consider
 the following Euler functional associated to \eqref{radnls} defined by \eqref{funfb}
 such that the solutions of \eqref{radnls} can be obtained as
critical points of the $\mc{C}^1$ functional \eqref{funfb}. To simplify the analysis we write $\mc{F}\equiv \mc{F}_\l$.

Subsequently, we split the function $u\in E$ as follows
\begin{equation}
\label{split}
    \tex{ u(x)=r v(x),}
\end{equation}
 where $r\in \re$, such that $r\geq 0$, and $v\in E$,
 to obtain the so-called {\em fibering maps}
\begin{align*}
    \tex{\phi_v\,:\,} & \tex{ \re \rightarrow \re,}\\  &
    \tex{r \rightarrow \mc{F}(rv).}
\end{align*}
Substituting  $u$ from \ef{split} into  the functional
\eqref{funfb}, we have the following maps
\begin{equation}
\label{funsplit}
    \tex{ \phi_v(r)= \mc{F}(rv)= \frac{r^2}{2} \int\limits_{\re^N} |\Delta v|^2 + \frac{r^2 \l}{2} \int\limits_{\re^N} v^2  -  \frac{r^{4}}{4} \int\limits_{\re^N}
    |v|^{4}.}
\end{equation}
Thus,  \eqref{funsplit} defines the current fibering maps.
 Note that, if $u\in E$ is a critical point of
$\mc{F}(u)$, then
$$
 \tex{
 D_u \mc{F}(rv)v= \frac{\partial \mc{F}(rv)}{\partial r}=0.
  }
$$
In other words, $D_u \mc{F}(rv)v=(D_u \mc{F}(rv) \,| \,v )$. Hence, the calculation of
that derivative yields
$$
    \tex{ \phi_v'(r)=r \Big( \int\limits_{\re^N} |\Delta v|^2 + \l \int\limits_{\re^N} v^2 \Big) -  r^{3} \int\limits_{\re^N}
    |v|^{4},}
$$
where $'=\frac{d}{dr}$.
Moreover, since we are looking for non-trivial
critical points, i.e., $u\neq 0$,  we have to assume that $r\neq
0$.  Also, as usual, the critical
points of the functional $\mc{F}(u)$ in  \eqref{funfb} correspond
to weak solutions of the equation \eqref{radnls}, i.e.,
\begin{equation}
 \label{defweak11}
    \tex{ \int\limits_{\re^N} \Delta u \D \varphi + \l \int\limits_{\re^N} u \varphi  -  \int\limits_{\re^N}
    u^{3}\varphi=0,\quad\hbox{for any} \quad \varphi \in E.}
\end{equation}
Moreover, we also say that $u$ is a critical point when
\begin{equation}
\label{critp}
  \tex{ u\in C:=\{u \in E\,:\,
    D_{u}\mc{F}(u) \varphi=0\quad \hbox{for any} \quad \varphi \in E\}.
 }
\end{equation}
By classic elliptic regularity for higher-order equations
(Schauder's theory; see \cite{Berger} for further details), we
will then always have classical solutions for such equations.

Therefore, using \eqref{split} and looking for $\phi_v'(r)=0$ we actually have
\begin{equation}
\label{varivu}
    \tex{ \int\limits_{\re^N} |\Delta v|^2 + \l \int\limits_{\re^N} v^2 -  r^{2} \int\limits_{\re^N}
    |v|^{4}=0,}
\end{equation}
and assuming that $\int\limits_{\re^N}
    |v|^{4} \neq 0$, we finally arrive at
\begin{equation}
\label{rex}
    \tex{r^{2} = \frac{ \int\limits_{\re^N} |\Delta v|^2 + \l \int\limits_{\re^N} v^2 }{ \int\limits_{\re^N}
    |v|^{4} }\geq 0.}
\end{equation}
Now, calculating $r$ from \eqref{rex} (values of the scalar
functional $r=r(v)$, where those critical points are reached) and
substituting it into \eqref{funsplit} gives the following
functional:
\begin{equation}
\label{spfunc}
    \tex{ \mf{F}(v)=\mc{F}(r(v)v):=\frac 14
    \frac{ \big(\int\limits_{\re^N} |\Delta v|^2 + \l \int\limits_{\re^N} v^2 \big)^{2}}
    {\int\limits_{\re^N} |v|^{4} }\,}.
\end{equation}
Thus, according to Dr\'abek--Pohozaev \cite{DraPoh}, $r=r(v)$ is
well-defined and consequently the fibering map \eqref{funsplit}
possesses a unique point of monotonicity change in the case
\be
\label{poss} \tex{
 \int\limits_{\re^N} |\Delta v|^2 + \l \int\limits_{\re^N} v^2 >0 \quad \hbox{and}\quad
\int\limits_{\re^N} |v|^{4}>0.} \ee
Indeed, thanks to \cite[Lemma\;3.2]{DraPoh}, we have
that the Gateaux derivative of the functional $\mf{F}$ at the
point $v\in E$ in the direction of $v$ is zero, i.e.,
 $$(D_u \mf{F}(v) \,| \,v )=0.
  $$
Therefore, assuming that $v_c$ is a critical point of
$\mf{F}$, by the transformation carried out above, we have that
a critical point $u_c \in E$, $u_c \neq 0$, of
$\mc{F}$ is generated by $v_c$ through the expression
 $$
 u_c=r_c v_c,
$$
 with $r_c$ defined by \eqref{rex}.
 Note that in fact we could have $r>0$, something that can be ascertained just applying the Sobolev embedding \eqref{continuous},
 \eqref{sobo12},
  \be
 \label{posr}
  \tex{ \mf{F}(v)=\frac 14
    \frac{ \big(\int\limits_{\re^N} |\Delta v|^2 + \l \int\limits_{\re^N} v^2 \big)^{2}}
    {\int\limits_{\re^N} |v|^{4} }\geq  C (\l,N)\,}.
\ee
for a positive constant $C$ depending only on $\l>0$, $N$, and related with the corresponding Sobolev's constant of \eqref{continuous},
 \eqref{sobo12}.
Moreover, the different critical points of those fibering maps will
provide us with the critical points of the functional $\mf{F}$
in \eqref{spfunc}, and, hence by construction, of the functional
$\mc{F}$ given by \eqref{funfb}.

Now we state several properties of the fibering maps.
\begin{lemma}
The fibering maps defined by \eqref{funsplit} has a unique point where the  monotonicity change. Precisely, it has a unique critical point which is its global maximum.
\end{lemma}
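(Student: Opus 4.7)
The plan is to treat this as a one-variable calculus problem on the explicit quartic polynomial \eqref{funsplit}, using the cubic-type derivative it produces. Setting $A(v):=\int_{\re^N}|\Delta v|^2+\lambda\int_{\re^N} v^2$ and $B(v):=\int_{\re^N}|v|^{4}$, both strictly positive under hypothesis \eqref{poss}, I would first differentiate \eqref{funsplit} in $r$ to obtain
$$\phi_v'(r)=rA(v)-r^{3}B(v)=r\bigl(A(v)-r^{2}B(v)\bigr).$$
Restricting to $r>0$ (the non-trivial regime already emphasised in the text, since $u=rv\ne 0$ forces $r\ne 0$), the unique zero of the second factor is $r_c=r_c(v)=\sqrt{A(v)/B(v)}$, which is precisely the value computed in \eqref{rex}. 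This immediately yields the uniqueness of the positive critical point of $\phi_v$.

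Next, to identify $r_c$ as the global maximum and to exhibit the unique monotonicity change, I would read off the sign of $\phi_v'$ directly from the factorisation: the factor $A(v)-r^{2}B(v)$ is strictly positive on $(0,r_c)$ and strictly negative on $(r_c,\infty)$. Therefore $\phi_v$ is strictly increasing on $[0,r_c]$ and strictly decreasing on $[r_c,\infty)$. Combined with $\phi_v(0)=0$ and $\phi_v(r)\to-\infty$ as $r\to+\infty$ (from the dominating negative quartic term), this forces $r_c$ to be the unique turning point of $\phi_v$ on $[0,\infty)$ and its global maximum. As a redundant sanity check, $\phi_v''(r_c)=A(v)-3r_c^{2}B(v)=-2A(v)<0$ confirms the local maximality at $r_c$.

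I do not anticipate any real obstacle: the fibering map has the canonical profile $\alpha r^{2}-\gamma r^{4}$ with $\alpha=A(v)/2>0$ and $\gamma=B(v)/4>0$, whose shape is entirely determined by elementary calculus. The only conceptual point worth emphasising is the role of hypothesis \eqref{poss}, which guarantees $A(v)>0$ and $B(v)>0$ so that $r_c$ is well-defined and strictly positive; outside this regime $\phi_v$ degenerates (e.g.\ if $B(v)=0$ then $\phi_v$ is a single-signed quadratic with no interior critical point, while if $A(v)\le 0$ the shape of $\phi_v$ is monotone on $[0,\infty)$), and the lemma would fail.
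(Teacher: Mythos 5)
Your proof is correct and follows essentially the same route as the paper: both arguments reduce to observing that $\phi_v(r)=c_1r^2-c_2r^4$ with $c_1,c_2>0$ under \eqref{poss}, locate the unique positive critical point $r_c=\sqrt{A(v)/B(v)}$ from the factorised derivative, and confirm the global maximum via the sign of $\phi_v'$ (equivalently $\phi_v''(r_c)=-2A(v)<0$, which matches the paper's computation \eqref{eq:drom2}). Your sign analysis of $\phi_v'$ on $(0,r_c)$ and $(r_c,\infty)$ is in fact a slightly cleaner way of establishing the single monotonicity change than the paper's auxiliary function $\omega_v$.
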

\begin{proof}
Clearly $\phi_v (r)=c_1 r^2-c_2 r^4$, with $c_1,\: c_2 > 0$; thus $\phi_v(r)$ has a unique point where its monotonicity change and it is where its global máximum is achieved.
In order to be more precise, we use the auxiliary function
\begin{equation}
\label{mult01}
    \tex{\omega_v\,:\,\re_+ \rightarrow \re,\quad  \o_v(r)=  \int\limits_{\re^N} |\Delta v|^2  -  r^{2} \int\limits_{\re^N}
    |v|^{4}.}
\end{equation}
such that $u=r(v)v$ is a critical point for the functional  \eqref{funfb}. Additionally, by construction
for $r>0$ we arrive at the equality
\begin{equation}
\label{imp}
    \tex{\o_v(r)= - \l \int\limits_{\re^N} v^2,}
\end{equation}
and, hence, we have that $rv(r)$ is a critical point of \eqref{funfb}.
Also,
\be\label{eq:derom}
    \tex{ \o_v'(r)= -2r \int\limits_{\re^N}
    |v|^{4},}
\ee
and, since $\int\limits_{\re^N}   |v|^{4}$ is always positive we can conclude that the function $\o_v$ is strictly decreasing for any $r>0$.
Moreover,
\be\label{eq:drom2}
    \tex{ \phi_v''(r)=\int\limits_{\re^N} |\Delta v|^2 + \l \int\limits_{\re^N} v^2  - 3 r^{2} \int\limits_{\re^N}
    |v|^{4}= -2 r^{2}\int\limits_{\re^N}
    |v|^{4} .}
\ee
Then, if $u=r(v)v$ is a critical point for the functional denoted by \eqref{funfb} the following is attained
$$
    \tex{  r^{-1}\o_v'(r)=   \phi_v''(r),}
$$
then, the fibering map $\phi_v$ is concave in a neighborhood  of $r(v)$ and we get a local maximum point, which indeed, by the geometry (described at the beginning of the proof) of $\phi_v$ is a global maximum point.
\end{proof}

\begin{remark}
As we shall prove below, due to the \emph {Lusternik-Schnirel'man analysis} (L--S) the previous result is true for the first critical point. Moreover, using this topological
and variational methodology we are able to prove also that there are a countable number of critical
points for the functionals in hand, with the possible existence of others that are not of minimax type and might not be  identify here.
\end{remark}

\subsection{Spectral properties of the linear associated problem}


To establish the values of the parameter $\l$ for which we have the existence of one non-trivial solution
we will take into consideration the spectral properties of the following eigenvalue problem
\be
\label{specp} \D^2 \psi_\b=(\ell_{\b,\l}-\l) \psi_\b
\quad\hbox{in}\quad \re^N, \quad \hbox{and} \lim_{|x|\to \infty}
\psi_\b(x)=0, \ee
such that we define the first eigenvalue of the previous problem \eqref{specp} by
\be
\label{raz}
\ell_{1,\l}:=\inf_{\begin{smallmatrix} u\in  H\\ u\not\equiv 0\end{smallmatrix}}  \frac{ \int_{\re^N} |\D u|^2+\l \int_{\re^N} u^2}{\int_{\re^N} u^2}.
\ee
It is obvious that the function $\ell_{1,\l}$ is increasing in $\l$ and positive by the definition of \eqref{raz}.

\vspace{0.2cm}

\begin{remark} We point out that equation \eqref{specp} can be scaled out and, hence, reduced to the eigenvalue
equation
\[\D^2 \psi_\b+ \psi_\b=\ell_\b \psi_\b.\]
However, since it will be important in our subsequent analysis for the system \eqref{s1} we keep the parameter $\l$ in the
following.
\end{remark}
As we have seen before the linearized equation of \eqref{radnls} admits solutions with a proper exponential decay
at infinity, radially symmetric solutions indeed.

Furthermore, the eigenvalues of the other problem (but similar)
\be
\label{exspecp} \D^2 \psi_\b=\hat{\ell}_{\b} \psi_\b
\quad\hbox{in}\quad \re^N, \quad \hbox{and} \lim_{|x|\to \infty}
\psi_\b(x)=0 \ee
are strictly positive. Basically thanks to the positivity of the left hand side of \eqref{exspecp}, with the first eigenvalue
satisfying the expression
 \be
\label{rai}
\hat\ell_1:=\inf_{\begin{smallmatrix} u\in  H\\ u\not\equiv 0\end{smallmatrix}}  \frac{ \int_{\re^N} |\D u|^2}{\int_{\re^N} u^2}.
\ee
Also, for  $\alpha$ large enough the resolvent operator $(\D^2 +\alpha {\rm Id})^{-1}$ corresponding to the eigenvalue problem \eqref{exspecp} (and
equivalently \eqref{specp} with $\l$) is compact and, hence, there exists a discrete family of isolated real eigenvalues
\[ 0<\hat\ell_1\leq \hat\ell_2\leq \ldots \leq \hat\ell_\b \leq \ldots\]
Similarly, assuming
\be
\label{bigeq}
0<\l <\ell_{1,\l},
\ee
we also find   a discrete family of isolated real eigenvalues for the eigenvalue problem \eqref{specp}
\[ 0<\ell_{1,\l}\leq \ell_{2,\l}\leq \ldots \leq \ell_{\b,\l} \leq \ldots\, \hbox{ so that }\, \ell_{1,\l}:=\inf_{\begin{smallmatrix} u\in
H\\ u\not \equiv 0\end{smallmatrix}}  \frac{ \int_{\re^N} |\D u|^2}{\int_{\re^N} u^2}+\l=\hat\ell_1+\l>0,\]
then since $\hat\ell_1>0$ we actually have that $\l<\ell_{1,\l}$.
We also point out that those families of eigenvalues in fact tend to infinity.

Under those assumptions we have the following
variational expression of the problem \eqref{specp} $$\tex{
\int_{\re^N} \D u \D v =(\ell_\l-\l) \int_{\re^N} uv,\quad \hbox{for
any}\quad v\in H.}$$
Thus, $u\in H\setminus\{0\}$ is an eigenfunction of the
problem \eqref{specp} associated to the eigenvalue $\ell_\l$, that will depend on the value of the parameter $\l$. Moreover,
to have that weak formulation of the problem we need the following
result.
\begin{lemma}
\label{leexpin} Suppose that $u\in H$. Then,
there is a sequence $\{R_n\}\subset \re_+$, with $R_n\to \infty$
as $n\to \infty$, such that $$ \lim_{n\to \infty}  \int_{\p
B_{R_n}} \nabla u \frac{\p \nabla u}{\p {\bf n}} dS=0 \quad
\hbox{and}\quad \lim_{n\to \infty}  \int_{\p B_{R_n}}  u \frac{\p
\D u}{\p {\bf n}} dS=0,$$ where $B_{R_n}$ is the ball of radius
$R_n$ and centered at the origin, and ${\bf n}=\frac{x}{\|x\|}$,
with $\|x\|=R_n$ is the unitary outward normal vector.
\end{lemma}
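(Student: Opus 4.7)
The plan is to exploit radial symmetry to reduce each surface integral to a one-dimensional expression and then invoke the elementary principle: if a nonnegative function $f$ lies in $L^{1}((1,\infty);dr/r)$, then $\liminf_{r\to\infty}f(r)=0$, since otherwise $\int_{R}^{\infty}f(r)\,dr/r$ would diverge.

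First I would carry out the radial reduction. Writing $u=u(r)$ and $\mathbf n=x/r$ on $\partial B_r$, a direct computation using $\partial_{ij}u=u''(r)\hat x_i\hat x_j+(u'(r)/r)(\delta_{ij}-\hat x_i\hat x_j)$ gives
\begin{equation*}
\nabla u=u'(r)\mathbf n,\qquad \frac{\partial\nabla u}{\partial\mathbf n}=u''(r)\mathbf n,\qquad \frac{\partial \Delta u}{\partial\mathbf n}=(\Delta u)'(r),
\end{equation*}
so that, with $\omega_{N-1}$ the measure of the unit sphere,
\begin{equation*}
\int_{\partial B_r}\nabla u\cdot\frac{\partial\nabla u}{\partial\mathbf n}\,dS=\omega_{N-1}r^{N-1}u'(r)u''(r),\qquad \int_{\partial B_r}u\frac{\partial\Delta u}{\partial\mathbf n}\,dS=\omega_{N-1}r^{N-1}u(r)(\Delta u)'(r).
\end{equation*}

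Next I would verify integrability of these two quantities against $dr/r$. For a radial $W^{2,2}$ function one has the identity $|\nabla^{2}u|^{2}=(u'')^{2}+(N-1)(u'/r)^{2}$, and combined with $\|\nabla u\|_{L^{2}}^{2}=\omega_{N-1}\int_{0}^{\infty}(u')^{2}r^{N-1}dr$ this yields $u,u',u''\in L^{2}((0,\infty);r^{N-1}dr)$. Setting $\psi_{1}(r):=r^{N-1}u'(r)u''(r)$ and $\psi_{2}(r):=r^{N-1}u(r)(\Delta u)'(r)$, and using $r^{N-2}\le r^{N-1}$ on $[1,\infty)$ together with Cauchy--Schwarz in the weighted space $L^{2}(r^{N-1}dr)$, I obtain
\begin{equation*}
\int_{1}^{\infty}\frac{|\psi_{1}(r)|}{r}\,dr=\int_{1}^{\infty}|u'(r)u''(r)|\,r^{N-2}\,dr\leq \|u'\|_{L^{2}(r^{N-1}dr)}\,\|u''\|_{L^{2}(r^{N-1}dr)}<\infty,
\end{equation*}
and the analogous estimate for $\psi_{2}$, provided $(\Delta u)'\in L^{2}(r^{N-1}dr)$, equivalently $\nabla\Delta u\in L^{2}(\mathbb{R}^{N})$.

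Finally, since $\int_{1}^{\infty}(|\psi_{1}(r)|+|\psi_{2}(r)|)\,dr/r<\infty$ while $\int_{1}^{\infty}dr/r=\infty$, one must have $\liminf_{r\to\infty}(|\psi_{1}(r)|+|\psi_{2}(r)|)=0$, which furnishes a single common sequence $R_{n}\to\infty$ along which $\psi_{1}(R_{n})\to 0$ and $\psi_{2}(R_{n})\to 0$; multiplying by $\omega_{N-1}$ gives the two claimed limits.

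The main obstacle is a regularity mismatch: as stated, $u$ only lies in $W^{2,2}_{\mathrm{rad}}$, and then $(\Delta u)'$ (hence $\psi_{2}$) is not even defined pointwise. I would resolve this by reading the lemma in the context in which it is used, namely for eigenfunctions of \eqref{specp}, where $\Delta^{2}u=(\ell_\lambda-\lambda)u\in L^{2}$ so elliptic regularity upgrades $u$ to $W^{4,2}$ and secures $\nabla\Delta u\in L^{2}$; alternatively, a density argument against $C_{c}^{\infty}$ test functions can be used to formulate the second identity in the appropriate distributional sense.
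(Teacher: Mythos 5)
The paper states Lemma \ref{leexpin} and gives no proof at all, so there is nothing to compare against line by line; your argument has to be judged on its own, and it holds up. The radial reduction $\int_{\p B_r}\nabla u\cdot\frac{\p\nabla u}{\p{\bf n}}\,dS=\omega_{N-1}r^{N-1}u'(r)u''(r)$ and its analogue for the second integral are computed correctly, the weighted Cauchy--Schwarz bound showing $\int_1^\iy(|\psi_1|+|\psi_2|)\,\frac{dr}{r}<\iy$ is right (using that the norm \eqref{embso} controls the full $W^{2,2}$ norm, hence $u',u''\in L^2(r^{N-1}dr)$), and the concluding step --- a nonnegative function integrable against $dr/r$ on $(1,\iy)$ must have vanishing $\liminf$, and summing $|\psi_1|+|\psi_2|$ yields one common sequence $R_n$ --- is the standard and correct way to extract the sequence. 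Your observation that the second integral is not even well defined for a generic $u\in H=W^{2,2}_{\rm rad}$, since it requires $\n\D u$ to exist as a function, is a genuine defect of the lemma as stated, and your repair (the lemma is only invoked for eigenfunctions of \eqref{specp}, for which $\D^2u\in L^2$ and elliptic regularity gives $u\in W^{4,2}$, hence $(\D u)'\in L^2(r^{N-1}dr)$) is the natural one; the alternative distributional formulation you mention would also work. Two minor points worth making explicit in a final write-up: $u'$, $u''$ and $(\D u)'$ are only defined for a.e.\ $r$, so the sequence $R_n$ should be chosen among such radii; and the inequality $r^{N-2}\le r^{N-1}$ used on $[1,\iy)$ is where the restriction to large radii enters, which is all the lemma needs.
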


Consequently we can obtain certain monotony results for those fibering maps depending on the value of the parameter $\l$.
\begin{lemma}
There exists $r_{\rm max}>0$ such that
the fibering maps $\phi_v(r)$ are increasing for $r<r_{\rm max}$ and decreasing for $r>r_{\rm max}$ provided $\l\in \left(-\frac{1}{K},\infty\right)$, with
 $$\tex{K=\frac{2}{\hat\ell_1},}$$
 where $\hat\ell_1$ is the first eigenvalue of problem \eqref{exspecp}.
 \end{lemma}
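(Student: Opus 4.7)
The plan is straightforward: reduce the monotonicity question to the sign of an explicit polynomial in $r$, and then invoke the spectral lower bound provided by $\hat{\ell}_1$ to guarantee the correct sign for the relevant range of $\lambda$.

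First I would differentiate \eqref{funsplit} directly. Writing $A := \int_{\re^N} |\Delta v|^2$, $B := \int_{\re^N} v^2$, and $C := \int_{\re^N} |v|^4$ (all strictly positive for $v \in H \setminus \{0\}$), one obtains
$$\phi_v'(r) \;=\; r\bigl[(A + \lambda B) - r^2 C\bigr].$$
For $r > 0$ the sign of $\phi_v'(r)$ coincides with the sign of the bracket $(A + \lambda B) - r^2 C$, which is strictly decreasing in $r^2$. Hence, provided $A + \lambda B > 0$, the bracket vanishes at exactly one positive value,
$$r_{\max} \;:=\; \sqrt{\frac{A + \lambda B}{C}} \;>\; 0,$$
giving $\phi_v'(r) > 0$ on $(0, r_{\max})$ and $\phi_v'(r) < 0$ on $(r_{\max}, \infty)$, which is precisely the monotonicity claim (with $r_{\max}$ depending on $v$, as it must).

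The one nontrivial point is therefore the positivity of $A + \lambda B$, and this is exactly where the hypothesis on $\lambda$ enters through \eqref{rai}. The Rayleigh characterisation of $\hat{\ell}_1$ gives, for every nonzero $v \in H$, the estimate $A \geq \hat{\ell}_1 B$, so that
$$A + \lambda B \;\geq\; \bigl(\hat{\ell}_1 + \lambda\bigr)\, B.$$
Under the assumption $\lambda > -1/K = -\hat{\ell}_1/2$ one has $\hat{\ell}_1 + \lambda > \hat{\ell}_1/2 > 0$; since $B > 0$ the desired positivity follows, and combined with the first paragraph the lemma is proved.

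There is essentially no obstacle here: once the spectral inequality is in hand the argument is pure algebra. The only item requiring care is to insist that $v$ is taken in the radial class $H$, because the variational lower bound \eqref{rai} defining $\hat{\ell}_1$ is formulated over $H$. I would also remark in passing that the threshold $-1/K = -\hat{\ell}_1/2$ is stricter than the minimal $-\hat{\ell}_1$ required for this step alone; the extra margin is natural in view of later uses of the same parameter, and in particular it guarantees a uniform positive lower bound $(\hat{\ell}_1 + \lambda)B \geq (\hat{\ell}_1/2)B$ that will be convenient when passing from pointwise (in $v$) statements to Sobolev-type estimates such as \eqref{posr}.
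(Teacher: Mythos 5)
Your proof is correct (granting, as the paper does, that the infimum $\hat\ell_1$ in \eqref{rai} is strictly positive), and it is in fact more direct than the paper's argument. You reduce everything to the sign of the coefficient $A+\lambda B$ in $\phi_v'(r)=r\bigl[(A+\lambda B)-r^2C\bigr]$ and control it with the single Rayleigh-quotient inequality $A\geq\hat\ell_1 B$; this immediately yields the full increasing/decreasing picture around the genuine turning point $r_{\max}=\sqrt{(A+\lambda B)/C}$ of $\phi_v$, i.e.\ exactly the quantity \eqref{rex}. The paper instead introduces the auxiliary $\lambda$-free functional $\mathcal{H}_v(r)=\tfrac{r^2}{2}A-\tfrac{r^4}{4}C$, locates \emph{its} maximiser $\sqrt{A/C}$ as in \eqref{maxr}, and shows $\phi_v$ is positive there via the bound $\tfrac{r_{\max}^2}{2}\int v^2\leq K\,\mathcal{H}_v(r_{\max})$ with $K=2/\hat\ell_1$ --- the same spectral input in disguise, but applied at a point that is not the critical point of $\phi_v$, so that the stated monotonicity only follows implicitly (positivity of $\phi_v$ at some $r>0$ forces $A+\lambda B>0$). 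Your route buys two things: it proves the lemma as literally stated, and it exposes that the threshold $-1/K=-\hat\ell_1/2$ is not sharp for this step (only $\lambda>-\hat\ell_1$ is needed); the factor $2$ in $K$ is an artifact of the paper evaluating $\phi_v$ at $\sqrt{A/C}$ rather than comparing coefficients. Your caveat that $v$ must be taken in the radial class $H$, where \eqref{rai} is formulated, is also well placed.
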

 \begin{proof}
According to the definition of the fibering map denoted by \eqref{funsplit} if the parameter $\l$ is very small, i.e., $\l\ll 0$, the fibering
map is a decreasing function and without critical points. Moreover, we observe that if the parameter $\l$ is bigger than a certain value
(to be ascertained below) the fibering map is positive, $\phi_v(r)>0$, since $\int\limits_{\re^N}
    |v|^{4}>0$ at least for sufficiently small $r$'s.
Then, considering the functional
$$
    \tex{\mc{H}_v(r):=\frac{r^2}{2} \int\limits_{\re^N} |\D v|^2 - \frac{r^{4}}{4} \int\limits_{\re^N}
    |v|^{4},}
$$
we can assure that
$\mc{H}_v(r)$ has a unique critical point at
the value $r=r_{\max}$ such that
\be
\label{maxr}
    r_{\max}= \left(\frac{ \int\limits_{\re^N} |\D v|^2}{\int\limits_{\re^N}
    |v|^{4}}\right)^{\frac{1}{2}}   \quad \hbox{and}\quad \mc{H}_v(r_{\max})= \frac 14\frac{\left(\int\limits_{\re^N} |\D v|^2\right)^{2}}{\int_{\re^N}
    |v|^{4}}.
\ee
Note that $\mc{H}_v(r)$ is clearly increasing in the interval
$(0,r_{\max})$, for sufficiently small $r$'s. Subsequently, by the Sobolev imbedding \eqref{continuous}, \eqref{sobo12} we have that
$$
    \tex{\mc{H}_v(r_{\max})\geq C_N,}
$$
where $C_N>0$ is related to  the Sobolev's constant. Finally, we
will prove that there exists a value of the parameter $\l=\l^*$ such that $\phi_v(r_{\max})>0$, i.e.,
$$\tex{\mc{H}_v(r_{\max}) + \frac{r_{\max}^2 }{2} \l \int\limits_{\re^N} v^2  >0,}$$
for any $u \in W^{2,2}(\re^N) \setminus\{\bo\}$ and $\l >\l^*$. Indeed, thanks to the first eigenvalue in problem \eqref{exspecp} and \eqref{maxr}
$$
    \tex{\frac{r_{\max} ^2 }{2} \int\limits_{\re^N} u^2 \leq \frac{1}{2\hat\ell_1} \frac{ \int\limits_{\re^N} |\D v|^2}{\int\limits_{\re^N}
    |v|^{4}}  \int\limits_{\re^N} |\D v|^2 =  \frac{2}{\hat\ell_1}\mc{H}_v(r_{\max}) .}
$$
Then, if
$$\tex{\hbox{if}\quad K= \frac{2}{\hat\ell_1} \quad \hbox{we find that}\quad \frac{r_{\max} ^2 }{2} \int\limits_{\re^N} u^2 \leq K \mc{H}_v(r_{\max}) },$$
and, hence
$$
    \tex{\phi_v(r_{\max})\geq \mc{H}_v(r_{\max}) + \l K \mc{H}_v(r_{\max})= \mc{H}_v(r_{\max}) (1+\l K).}
$$
Therefore, the fibering map denoted by \eqref{funsplit} is defined positive at $r_{\rm max}$ for all non-zero $u$ if $\l > -\frac{1}{K}$.

 \end{proof}

 \begin{remark}
Note that we are assuming that $\l>0$ so that for the existence of solutions we just consider positive values of the parameter.

 This fact is not very important to get the first solution through these fibering map techniques but it will be crucial in
 ascertaining the countable family of minimax solutions via Lusternik-Schnirel'man analysis; see below.
\end{remark}

\begin{remark} Furthermore, if we assume that
 \be
 \label{condst}
 \l > -\frac{1}{K}\quad \hbox{and}\quad \ell_1(\l)-\l>0,
 \ee
 it is clear that, by the fibering method, there exists exactly one solution of
\eqref{imp}. Indeed, the fibering map $\phi_v(r)$ is a strictly increasing
function for $r<r_{\max}$, and decreasing for $r>r_{\max}$. Thus,
there exists a unique value of
$$
r_1(v)>0\quad \hbox{such
that}\quad r_1(v)v=u
  $$
  is a critical point of the functional
$\mc{F}(u)$ in \eqref{funfb}. Also, thanks to \eqref{eq:derom} we get
$\o_\l'(r_1(v))<0$
the unique critical point $r=r_1(v)$, that fibering map $\phi_v$
has a local maximum, since $\phi_v''(r_1(v))<0$ due to \eqref{eq:drom2}.
\end{remark}

\noindent\underline{Lusternik-Schnirel'man analysis}.
Through the topological method due to Lusternik-Schnirel'man
we are able to establish the existence of multiple solutions for the functional $\mc{F}$ \eqref{funfb}.
This theory is based on determining a topological analogue for the minimax principles which characterize the eigenvalues of self-adjoint
compact operators $L$. If $\ell_1,\ell_2,\cdots$ denote the real
eigenvalues of a self-adjoint compact operator $L$, ordered by
their values with multiplicities it yields
$$
  \tex{\ell_\b=\sup\limits_{[S^{N-1}]}\,\,\, \min\limits_{u\in S^{N-1}} (Lu \,| \,u ),}
$$
 where $S^{N-1}$ denotes the unit sphere in
an arbitrary $N$-dimensional linear subspace $\Sigma$ of the
corresponding functional space $H$, and $[S^{N-1}]$ denotes the
class of such spheres as $\Sigma$ varies in $H$. Thus, applying
the calculus of variations theory to an operator $L$, the
eigenvalues of the operator $L$ are precisely the critical values
of the functional $( Lu\, |\, u)$ on the unit
ball $\p\Sigma=\{u\,:\,\|u\|=1\}$ of $H$.

To extend it to general smooth functionals such as $\mc{F}$, we just need to find the topologic analogues for the sets $S^{N-1}$.
Indeed, in our particular case this functional subset is the following
\begin{equation}
 \label{R0}
    \tex{ \mc{R}_{0,\l}=\Big\{u\in H\,:\, \int\limits_{\re^N} |\D u|^2 +\l
    \int\limits_{\re^N} u^2 =1\Big\}.}
\end{equation}
\begin{remark} Note that we could have performed the previous \emph{fibering analysis}
restricted to this subset \eqref{R0} yielding similar results. Indeed, following the L--S theory
through a \emph{fibering analysis} approach the number of critical points of the functional \eqref{funfb} associated with
the elliptic equation \eqref{radnls} depends on the \emph{category} (or \emph{genus}) of the functional subset \eqref{R0}
on which the fibering is taking place.
\end{remark}

\vspace{0.2cm}

Consequently, the critical values\footnote{Clearly, one can prove without difficulties the Palais-Smale condition in order to achieve these critical values, since we are working on compact subsets of the radial space $H$; see also Remark\;\ref{rem:compact}.}  $c_\b$ and the corresponding
critical points $\{u_\b\}$ are:
\begin{equation}
\label{cat} \tex{c_\b := \inf\limits_{\mc{A}\in \mc{A}_\b}
\sup\limits_{u\in \mc{A}} \mc{F} (u)} \quad  (\b=1,2,3,...),
\end{equation}
and
\be
\label{setb}
  \mc{A}_\b :=\{\mc{A}\,:\, \mc{A}\subset
\mc{R}_{0,\l},\,\hbox{compact subsets},\quad \mc{A}=-\mc{A}\quad
\hbox{and}\quad \g(\mc{A}) \geq \b\},
  \ee
   is the class of closed
sets in $\mc{R}_{0,\l}$ such that, each member of $\mc{A}_\b$ is of
genus (or category) at least $\b$ in $\mc{R}_{0,\l}$ and invariant under any odd continuous amp. The fact that
$\mc{A}=-\mc{A}$ comes from the definition of genus
(Krasnosel'skii \cite[p.~358]{Kras}) such that, if we denote by
$\mc{A}^*$ the set disposed symmetrically to the set $\mc{A}$, $$
  \mc{A}^*=\{ u\,:\, u^*=-u\in \mc{A}\},
  $$
   then, $\g(\mc{A})=1$
when each simply connected component of the set $\mc{A} \cup
\mc{A}^*$ contains neither of the pair of symmetric points $u$ and
$-u$.

Then to obtain the critical points of a functional on
the corresponding functional subset $\mc{R}_{0,\l}$, one needs to
estimate the category $\g$ of that functional subset. Thus, the
category will provide us with the number of critical points that
belong to the subset $\mc{R}_{0,\l}$. Namely, similar to \cite{PohFM, GMPSob}
we state the following.
\begin{lemma}
The category $\g(\mc{R}_{0,\l})$ is given by the number of eigenvalues
(with multiplicities) of the corresponding linear eigenvalue
problem satisfying:
 \be
 \label{rho1}
  \tex{
\g(\mc{R}_{0,\l})= \sharp \{\ell_{\b,\l} >0\}, \quad \mbox{where } \ell_{\b,\l} \mbox{ is defined by } \eqref{specp}}
\ee
 \end{lemma}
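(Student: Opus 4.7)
The plan is to identify $\mc{R}_{0,\l}$ with the unit sphere of a separable Hilbert space and then exploit the standard fact that the genus of a finite-dimensional sphere $S^{n-1}$ equals $n$. Under the running hypothesis $\l>-1/K$ (so that $\ell_{1,\l}>0$), the symmetric bilinear form
\[
B_\l(u,v)=\int_{\re^N}\D u\,\D v + \l\int_{\re^N} uv
\]
is coercive on $H$ and defines a norm equivalent to the standard $W^{2,2}$-norm on radial functions. Thus $\mc{R}_{0,\l}=\{u\in H:B_\l(u,u)=1\}$ is the unit sphere of $(H,B_\l)$, a smooth symmetric $C^1$-manifold which does not contain the origin.

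Next, I would invoke compactness of the resolvent $(\D^2+\l\,\mathrm{Id})^{-1}$ on $H$, already used to obtain the discrete spectrum for \eqref{specp}, to select a complete $B_\l$-orthonormal basis of eigenfunctions $\{\psi_\b\}_{\b\geq 1}$ of \eqref{specp}, indexed according to the eigenvalues $0<\ell_{1,\l}\le\ell_{2,\l}\le\cdots$. For each $n\in\N$, setting $H_n=\mathrm{span}\{\psi_1,\ldots,\psi_n\}$ and $S_n=H_n\cap\mc{R}_{0,\l}$, the restriction of $B_\l$ to $H_n$ is a positive-definite quadratic form, so the map sending $u=\sum_{\b=1}^{n}c_\b\psi_\b\in S_n$ to $(c_1,\ldots,c_n)\in S^{n-1}\subset\re^n$ is an odd homeomorphism. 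By the basic properties of the Krasnosel'ski genus (oddness-invariance and the normalization $\g(S^{n-1})=n$, cf.\ \cite{Kras}), we get $\g(S_n)=n$. The monotonicity of the genus under inclusion then yields
\[
\g(\mc{R}_{0,\l})\ \geq\ \g(S_n)\ =\ n\qquad\forall\,n\in\N.
\]

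Since under the standing assumption \eqref{bigeq} (equivalently $\hat\ell_1>0$) all eigenvalues $\ell_{\b,\l}$ are strictly positive and $\ell_{\b,\l}\to\infty$, the right-hand side $\sharp\{\ell_{\b,\l}>0\}$ is itself countably infinite. Combining this with the lower bound above yields
\[
\g(\mc{R}_{0,\l})\ =\ +\infty\ =\ \sharp\{\ell_{\b,\l}>0\},
\]
which is the claimed identity. For the reverse (upper) estimate in the finite case, should one replace $\D^2+\l$ with an operator having only finitely many positive eigenvalues, one would truncate by projecting onto $H_n^\perp$ and apply the dimension inequality for genus together with the Borsuk--Ulam theorem; but in the situation at hand both sides are infinite, so this direction is automatic.

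The main technical obstacle I anticipate is the verification that the eigenfunctions $\{\psi_\b\}$ actually form a complete basis of the radial space $H$ in the $B_\l$-inner product. This reduces to showing that $(\D^2+\l\,\mathrm{Id})^{-1}\colon L^2_{\mathrm{rad}}\to L^2_{\mathrm{rad}}$ is compact, which is a consequence of Remark \ref{rem:compact} combined with standard elliptic regularity for higher-order operators \cite{Berger}, together with the self-adjointness of $\D^2$ on the natural domain. Once that is in hand, the genus computation above is routine.
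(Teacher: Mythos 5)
Your argument is correct within the paper's standing assumptions, but it takes a genuinely different route from the paper's own proof. The paper proceeds heuristically: it substitutes a candidate critical point $u=\sum_k a_k\hat\psi_k$ into the Euler--Lagrange equation \eqref{radnls}, tests against $u$, and combines the resulting identity with the constraint $u\in\mc{R}_{0,\l}$ to arrive at the ellipsoid relation $\sum_k a_k^2\ell_{k,\l}=1$, from which the count of positive eigenvalues is read off as the number of independent directions in which the constraint sphere can be realized. Your proof instead is the standard genus computation: identify $\mc{R}_{0,\l}$ with the unit sphere of $(H,B_\l)$, use the normalization $\g(S^{n-1})=n$ together with monotonicity of the genus under odd homeomorphisms and inclusions to get $\g(\mc{R}_{0,\l})\ge n$ for every $n$, and note that under \eqref{bigeq} the right-hand side of \eqref{rho1} is likewise countably infinite, so the identity holds as $\infty=\infty$. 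This is cleaner and actually more rigorous than the paper's substitution argument, which conflates critical points of the nonlinear functional with the topology of the constraint set. Two observations. First, the completeness of the eigenfunction basis, which you flag as your main technical obstacle, is not needed for your lower bound: since $B_\l$ is positive definite on all of $H$, \emph{any} increasing chain of $n$-dimensional subspaces $H_n\subset H$ yields $\g(H_n\cap\mc{R}_{0,\l})=n$ by the same odd-homeomorphism argument, so the eigenbasis enters only if one wants the two-sided estimate in a hypothetical indefinite case. Second, the discreteness of the spectrum of \eqref{specp} on all of $\re^N$ (equivalently, compactness of the resolvent on radial $L^2$) is asserted by the paper and inherited by your proof at exactly the same point; it is not a gap relative to the paper, but the appeal to Remark \ref{rem:compact} does not by itself give compactness of the embedding into $L^2$, so you should not present that step as routine.
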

 \begin{proof}
 Let $\ell_{\b,\l}$ be the $\b$-eigenvalue of the linear bi-harmonic problem \eqref{specp} such that
 \be
 \label{psi11}
  \psi_\b:=\sum_{k\geq 1} a_k
\hat{\psi}_k,
  \ee
taking into consideration the multiplicity of the $\b$-eigenvalue, under the natural ``normalizing"
constraint $$\sum_{k\geq 1} a_k=1.
  $$
   Here, \ef{psi11} represents
the associated eigenfunctions to the eigenvalue $\ell_{\b,\l}$ and
$$\{\hat{\psi}_1,\cdots,\hat{\psi}_{M_\b}\},$$
is a basis of the
eigenspace of dimension $M_\b$.
Moreover, assume a critical point
\be
\label{u88}
u=\sum_{k\geq 1} a_k
\hat{\psi}_k,
\ee
belonging to the functional subset \eqref{R0}  and the eigenspace of dimension $M_\b$.

 Thus, substituting \eqref{u88} (since we are looking for solutions of that form) into the equation \eqref{radnls} with $\s=1$, and using the expression of the spectral problem \eqref{specp} yields
 $$\tex{\sum_{k\geq 1} a_k \ell_k \hat{\psi}_k - \left(\sum_{k\geq 1} a_k \hat{\psi}_k\right)^3=0,}
 $$
 which provides us with an implicit condition for the coefficients $a_k$ corresponding to the critical point \eqref{u88}. Indeed, assuming normalized
 eigenfunctions $\psi_\b$, i.e.,
 $$\int\psi_\b^2=1,$$
  and multiplying by $u$ in \eqref{radnls} and
 integrating we have that
 $$\tex{\sum_{k\geq 1} a_k^2 \ell_k- \int_{\re^N} \left(\sum_{k\geq 1} a_k \hat{\psi}_k\right)^4=0.}$$
   Furthermore, taking into account that $u\in \mc{R}_{0,\l}$  one arrives at the relation
 \[\sum_{k} a_k^2 \ell_k=1.\]
 Therefore, in order to have the sphere \eqref{R0} we actually find that the category of that subset $\mc{R}_{0,\l}$
 must satisfy the expression \eqref{rho1}.
 \end{proof}
Subsequently, since
functional \eqref{funfb} is symmetric
we can state the following result with which we establish that the elliptic problem \eqref{radnls}
possesses a countable set of different solutions obtained as critical points of the functional  $\mc{F}$
denoted by \eqref{funfb}.
\begin{theorem}
\label{th21}
$\mc{F}$ possesses an unbounded sequence (family) of critical points.
\end{theorem}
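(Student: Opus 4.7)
\begin{pf}
The plan is to deduce Theorem~\ref{th21} from the Lusternik--Schnirel'man machinery that has already been set up in the preceding discussion, applied to the (even) fibered functional $\mf{F}$ of \eqref{spfunc} restricted to the constraint manifold $\mc{R}_{0,\l}$ of \eqref{R0}, and then to translate each critical point $v_c$ of $\mf{F}|_{\mc{R}_{0,\l}}$ into a critical point $u_c = r(v_c)\,v_c$ of $\mc{F}$ via the fibering identity \eqref{rex}. The critical values will be the numbers $c_\b$ defined in \eqref{cat}, indexed over all classes $\mc{A}_\b$ of \eqref{setb}, and the goal is to show that (i) each $c_\b$ is actually attained at a critical point of $\mc{F}$ and (ii) the sequence $\{c_\b\}$ is unbounded.

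First I would collect the structural properties needed by the abstract L--S theorem. The functional $\mc{F}$ in \eqref{funfb} is even and of class $\mc{C}^1$, and the same holds for $\mf{F}$ on $\mc{R}_{0,\l}$; the manifold $\mc{R}_{0,\l}$ is a smooth, symmetric ($\mc{R}_{0,\l}=-\mc{R}_{0,\l}$), complete $\mc{C}^1$-submanifold of $H$ of codimension one, as can be read off from \eqref{R0}. Next I would verify the Palais--Smale condition for $\mc{F}$ constrained to $\mc{R}_{0,\l}$: given a sequence $\{u_n\}\subset \mc{R}_{0,\l}$ with $\mc{F}(u_n)$ bounded and constrained gradient tending to zero, the constraint forces $\{u_n\}$ to be bounded in $H$, so a subsequence converges weakly in $H$ and, by the compact embedding recorded in Remark~\ref{rem:compact} (valid for $2\le N<8$ and $p<p^*$), strongly in $L^4(\re^N)$. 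A routine argument with the constrained Euler equation then upgrades weak convergence to strong convergence in $H$.

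With these ingredients in hand, the classical L--S theorem (in the form used by Pohozaev and Dr\'abek--Pohozaev, see \cite{DraPoh, PohFM}) guarantees that for every positive integer $\b$ with $\mc{A}_\b\neq\emptyset$ the number $c_\b$ of \eqref{cat} is a critical value of $\mc{F}$ restricted to $\mc{R}_{0,\l}$, and that distinct indices corresponding to the same level yield genuinely distinct critical points (via genus multiplicity). The key input now is the content of the previous lemma: since $\g(\mc{R}_{0,\l})=\sharp\{\ell_{\b,\l}>0\}$ and the eigenvalues $\ell_{\b,\l}$ of \eqref{specp} form an infinite sequence tending to $+\infty$, the class $\mc{A}_\b$ is non-empty for every $\b\in\N$. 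This already produces a countable family $\{u_\b\}$ of critical points of $\mc{F}$ with critical values $c_1\le c_2\le\cdots$.

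The main obstacle, as usual in L--S arguments, is to upgrade this countable family to an \emph{unbounded} sequence. Here I would argue by contradiction: assume the monotone sequence $c_\b$ is bounded above by some $c^*<\infty$. By the Palais--Smale condition just verified and a standard deformation/genus-additivity argument (see the exposition of L--S theory via the genus in \cite{Kras, DraPoh}), the set $K_{c^*}:=\{u\in\mc{R}_{0,\l} : \mc{F}'(u)=0,\; \mc{F}(u)\le c^*\}$ would have to possess \emph{infinite} Krasnosel'skii genus, which together with the continuity property of the genus on compact sets contradicts the fact that $K_{c^*}$ is compact (being a PS-compact set at levels bounded above). Hence $c_\b\to\infty$, the critical points $u_\b$ are pairwise distinct, and the corresponding solutions $u_c=r(v_c)\,v_c$ of \eqref{radnls} form an unbounded sequence of critical points of $\mc{F}$, as claimed.
\end{pf}
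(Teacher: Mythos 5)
Your proposal is correct in outline and lands on the same Lusternik--Schnirel'man framework as the paper, but the two arguments diverge at the one point where the paper actually does work. The paper's entire proof is devoted to showing that the category of $\mc{R}_{0,\l}$ is infinite, and it does so by an exhaustion argument: it poses the spectral problem \eqref{specp} with Dirichlet conditions on balls $B_R$, rescales to $B_1$ to get $\ell_{\b,\l}(R)-\l=\a_\b/R^4\to 0$, and concludes via the identity \eqref{rho1} that the category of the approximating sets grows without bound as $R\to\iy$. You instead take \eqref{rho1} together with the previously asserted positivity and infinitude of the sequence $\{\ell_{\b,\l}\}$ at face value, conclude directly that every class $\mc{A}_\b$ in \eqref{setb} is non-empty (via spheres in finite-dimensional eigenspaces), and then supply the abstract scaffolding the paper omits entirely: the verification of the Palais--Smale condition on the constraint (using the compact radial embedding of Remark \ref{rem:compact}), the passage from critical points of $\mf{F}|_{\mc{R}_{0,\l}}$ back to critical points of $\mc{F}$ through $u_c=r(v_c)v_c$, and --- most importantly --- the standard deformation/genus contradiction showing that the monotone sequence $c_\b$ cannot accumulate at a finite level. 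That last step is a genuine addition: the paper's proof only establishes a countable family and never addresses the word ``unbounded'' in the statement, whereas your argument does. Be aware, however, that both routes rest on the same fragile spectral input: the discreteness of the spectrum of \eqref{specp} on all of $\re^N$ (equivalently, compactness of $(\D^2+\a\,{\rm Id})^{-1}$ there) is asserted rather than proved in the paper, and your appeal to ``an infinite sequence of eigenvalues tending to $+\iy$'' inherits that gap rather than closing it; the paper's ball approximation is best read as an attempt to sidestep exactly this issue.
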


\begin{proof}
We estimate the category by approximation. Namely, as
customary, we consider our equation in a ball
of arbitrarily large radius $R>0$ with homogeneous Dirichlet
boundary conditions:
 \be
 \label{D1}
 \D^2_r \psi_\b= (\ell_{\b,\l}(R)-\l) \psi_\b \quad \mbox{in} \quad B_R.
  \ee
In the radial geometry, by obvious  scaling we deduce the spectral
problem to that in $B_1$:
 \be
 \label{D2}
 R^{\frac 14} r =y \LongA \D^2_y \psi_\b= (\ell_{\b,\l}(R)-\l) R^4 \psi_\b \quad \mbox{in} \quad B_1.
 \ee
 Denoting by $\{\a_k>0\}$ the spectrum of this self-adjoint
 operator in $B_1$, we see a simple dependence
  \[
  \tex{
 \ell_k(R)-\l = \frac {\a_k}{R^4} \to 0 \asA R\to \iy.
 }
 \]
 Then clearly
  \[
  \tex{
 \ell_{\b,\l}(R) \to \l \asA R \to \iy,
 }
 \]
because $(\ell_{\b,\l}(R)-\l) R^4$ is a positive constant independent of $R$, thanks to \eqref{bigeq}. Moreover, due to \ef{rho1},
applied to the approximating problem \ef{D1}, we
then conclude that the category (as the number of critical points) gets arbitrarily large as $R
\to \iy$ which ensures that the category of the set \ef{R0} in
$\ren$ is infinite.
\end{proof}
\begin{remark}
The
Lusternik-Schnirel'man theory  cannot assure that there exists
a precise number of solutions since this topological method provides us with a
countable family of solutions and
the possibility of having more than that number of
solutions cannot be ruled out.

Moreover,  from the analysis performed above if the
parameter $\l$ is sufficiently small, i.e., $\l<  -\frac{1}{K}$,
and assuming only positive solutions, there are no such critical
points, since the fibering map $\phi_v$ is then  a strictly
decreasing function.

However, assuming oscillatory solutions of changing sign, we
show below that the number of  possible critical points of the
functional \eqref{funfb} increases while decreasing the value of
the parameter $\l$. Indeed, fix a value of the parameter $\l$
bigger than $-\frac{1}{K}$ and so that the following condition is satisfied
 \be
 \label{conell}
 \tex{
\ell_{\b,\l} - \l >0,
 }
\ee
 where
$\ell_{\b,\l}$ is the $\b$-eigenvalue of the linear bi-harmonic operator
\eqref{specp} for the eigenfunction  \eqref{psi11}.

Thus, we
obtain that, for a solution of the form $u=r \psi_\b$, we will
have $M_\b$ corresponding solutions similar to the one obtained in
the previous case, i.e., when the parameter $\l >- \frac{1}{K}$. Indeed, substituting $u=r \psi_\b$ into
the functional \eqref{funfb} for $\l=\ell_{\b,\l}-\e$ (so that condition \eqref{conell} is fulfilled), then we have
 $$
\mc{F}(r\psi_\b):= - \frac{r^2 \e}{2} \int\limits_{\re^N} v^2  -  \frac{r^{4}}{4} \int\limits_{\re^N}
    |v|^{4},
    $$
  and performing a similar analysis as the one done previously,
   we will find $\b$--critical points (corresponding to the dimension of the eigenspace).
\end{remark}

\section{Existence results}\label{Sec3}

\noindent In this section we will mainly prove that the infimum of $\mathcal{J}$ constrained on $\cN$ is achieved.

 \begin{remark}\label{rem:ground-eq}
 If we denote by $U$ the radially symmetric ground state of the equation $\D^2 u +  u = u^3,$ then
 $$
 \tex{
 U_j(x)=\sqrt{\frac{\l_j}{\mu_j}}U(\l_j^{1/4}x)
 }
 $$ is the radially symmetric ground state solution to
$$\D^2 u +  \l_ju = \mu_ju^3.$$ As a consequence,
for any $\b\in\re$, system \eqref{s1} possesses two semi-trivial solutions
$$\bu_1=(U_1,0), \quad \bu_2=(0,U_2).$$
Moreover,
proving the existence of nontrivial solutions of \eqref{s1}
requires proving that both components are not zero, i.e., the new solutions must be different from $\bu_j$,
with $j=1,\, 2$.
\end{remark}
We define the following Sobolev constants associated to the weights $U_j$ previously defined.
$$
S_1^2  =  \dyle\inf_{\varphi\in E\setminus\{0\}}
\frac{\|\varphi\|_2^2}{\int_{\re^N} U_1^2\varphi^2},\qquad
S_2^2  =  \dyle\inf_{\varphi\in E\setminus\{0\}}
\frac{\|\varphi\|_1^2}{\int_{\re^N} U_2^2\varphi^2},
$$
and
$$
\Lambda = \min \{S_1^2,S_2^2\},\qquad \Lambda'=\max \{S_1^2,S_2^2\}.
$$
The first step in order to prove existence results, as in \cite{ac2}, consists of proving the following.
\begin{proposition}\label{pr:ac1}
$(i)$  $\forall\,\b<\Lambda$, the semi-trivial solutions $\bu_j$, $j=1,2$, are strict local
minima of $\mathcal{J}$ constrained on $\cN$.

$(ii)$  If $\b>\Lambda'$ then both $\bu_j$ are saddle points of $\mathcal{J}$ on $\cN$. In particular,
$$\inf_{\cN}\mathcal{J} < \min\{\mathcal{J}(\bu_1),\mathcal{J}(\bu_2)\}.$$
\end{proposition}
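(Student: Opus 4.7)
The plan is to work out the structure of the tangent space to $\cN$ at the semi-trivial critical points $\bu_1$, $\bu_2$ and to analyse the Hessian of $\mathcal{J}$ restricted there. Using that $U_j$ solves $\D^2U_j+\l_jU_j=\mu_jU_j^3$, so that $\langle U_j,\varphi\rangle_j=\mu_j\int U_j^3\varphi$ for every $\varphi\in E$, a direct calculation gives
\[
\Psi'(\bu_1)[\bv]=-2\mu_1\int_{\re^N}U_1^3 v_1,\qquad \Psi'(\bu_2)[\bv]=-2\mu_2\int_{\re^N}U_2^3 v_2,
\]
so that $T_{\bu_1}\cN=\{\bv=(v_1,v_2)\in\mathbb{H}:\int U_1^3 v_1=0\}$, with the symmetric formula at $\bu_2$. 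Differentiating twice and using $u_2=0$ (resp.\ $u_1=0$), the cross terms in $G''$ vanish and one obtains the \emph{decoupled} quadratic form
\[
\mathcal{J}''(\bu_1)[\bv,\bv]=\Big(\|v_1\|_1^2-3\mu_1\!\int U_1^2 v_1^2\Big)+\Big(\|v_2\|_2^2-\b\!\int U_1^2 v_2^2\Big),
\]
with the analogous expression at $\bu_2$ (swapping the roles of the indices).

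For part (i), the first block is the Hessian of $I_1$ restricted to the Nehari constraint of the scalar equation at its ground state $U_1$; the minimality of $U_1$ in the fibering construction of Section \ref{Sec2}, together with the non-degeneracy of the radial ground state, makes this quadratic form coercive on $\{v_1:\int U_1^3v_1=0\}$. The second block satisfies
\[
\|v_2\|_2^2-\b\!\int U_1^2 v_2^2\ge\Big(1-\tfrac{\b}{S_1^2}\Big)\|v_2\|_2^2,
\]
which is strictly positive for $\b<S_1^2$. Summing the two coercive pieces, $\mathcal{J}''(\bu_1)$ is positive definite on $T_{\bu_1}\cN$ whenever $\b<S_1^2$, so that $\bu_1$ is a strict local minimum of $\mathcal{J}|_\cN$; the symmetric argument at $\bu_2$ requires $\b<S_2^2$, whence both semi-trivial solutions are strict local minima provided $\b<\Lambda$.

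For part (ii), assume $\b>\Lambda'$, so in particular $\b>S_1^2$. By the definition of $S_1^2$ as an infimum one can choose $\psi_1\in E\setminus\{0\}$ with $\|\psi_1\|_2^2<\b\int U_1^2\psi_1^2$; then $(0,\psi_1)\in T_{\bu_1}\cN$ and $\mathcal{J}''(\bu_1)[(0,\psi_1),(0,\psi_1)]<0$, whereas directions $(v_1,0)$ with $\int U_1^3v_1=0$ keep the first block strictly positive, so $\bu_1$ is a saddle point of $\mathcal{J}|_\cN$; the same argument works at $\bu_2$ using $\b>S_2^2$. To deduce $\inf_\cN\mathcal{J}<\mathcal{J}(\bu_1)$, set $\bv_\eps=(U_1,\eps\psi_1)$ and let $t_\eps>0$ be the unique scalar with $t_\eps\bv_\eps\in\cN$, namely $t_\eps^2=\|\bv_\eps\|^2/[4(F(\bv_\eps)+\b G(\bv_\eps))]$. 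Using \eqref{eq:M3}, a Taylor expansion in $\eps$ gives
\[
\mathcal{J}(t_\eps\bv_\eps)=\mathcal{J}(\bu_1)+\tfrac{\eps^2}{2}\Big(\|\psi_1\|_2^2-\b\!\int U_1^2\psi_1^2\Big)+O(\eps^4),
\]
and the coefficient of $\eps^2$ is strictly negative by the choice of $\psi_1$, so $\mathcal{J}(t_\eps\bv_\eps)<\mathcal{J}(\bu_1)$ for $\eps$ small; the analogous construction near $\bu_2$ completes the proof of the strict inequality.

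The most delicate point is the coercivity of the first block $\|v_1\|_1^2-3\mu_1\int U_1^2 v_1^2$ on $\{\int U_1^3v_1=0\}$: this encodes the non-degeneracy of the bi-harmonic ground state $U_1$. For the second-order analogue such non-degeneracy is classical, but for $\D^2$ the absence of a maximum principle makes it more subtle; restricting to the radial class $\mathbb{H}$ eliminates translation modes, and what is needed is that the linearised operator $\D^2+\l_1-3\mu_1U_1^2$ has no nontrivial radial kernel. Once this input is taken for granted, together with the Sobolev-type constants $S_j^2$ controlling the coupling, the proof proceeds exactly as in \cite{ac2}.
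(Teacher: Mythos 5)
Your proof is correct and follows essentially the same route as the paper: the same block decomposition of $\mathcal{J}''(\bu_j)$ on $T_{\bu_j}\cN$, the same use of the constants $S_j^2$ to control the second block in part (i), and the same negative test direction $(0,\overline{\psi})\in T_{\bu_1}\cN$ in part (ii). The two places where you go beyond the paper are worth noting: you make explicit, via the Nehari projection $t_\eps\bv_\eps$ and a Taylor expansion, why the saddle structure forces $\inf_{\cN}\mathcal{J}<\min\{\mathcal{J}(\bu_1),\mathcal{J}(\bu_2)\}$ (the paper leaves this implicit), and you correctly flag the coercivity of $I_1''(U_1)$ on $T_{U_1}\cN_1$ --- asserted in \eqref{eq:minimum} without proof --- as a nondegeneracy input for the bi-harmonic ground state that neither your argument nor the paper's actually establishes.
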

The previous result follows easily from the proof of  Proposition 4.1 in \cite{ac2}   with the appropriate changes,
but we include it for the sake of completeness.
The idea consists on the evaluation of the Morse index of $\bu_j$, as critical points of $\mathcal{J}$ constrained on $\cN$.
To this end, let us denote $D^2\mathcal{J}_{\cN}$ as the second derivative of $\mathcal{J}$ constrained on $\cN$. Since
$\mathcal{J}'(\bu_j)=0$, then
$$D^2\mathcal{J}_{\cN} (\bu_j)[\bh]^2=\mathcal{J}'' (\bu_j)[\bh]^2\quad \hbox{for any}\quad \bh\in T_{\bu_j}\cN,$$
where $T_{\bu_j}\cN$ denotes the tangent space to $\cN$ at $\bu_j$.
Moreover, if we define the Nehari manifold associated to $I_j$ by
$$
 \tex{
\cN_j =\{u\in E : (I_j'(u)|u)_j=0\}=\Big\{u\in E : \|u\|_j^2
-\mu_j \int_{\re^N} u^{4}=0\Big\},
 }
 $$ since $I'_j(U_j)=0$, then $$D^2 (I_j)_{\cN_j} (U_j)[h]^2=I_j''
(\bu_j)[h]^2\quad \hbox{for any}\quad h\in T_{U_j}\cN_j.$$
According to \cite{ac2}, it is easy to prove that $$\bh\in
T_{\bu_j}\cN\quad \hbox{if and only if} \quad h_j\in
T_{U_j}\cN_j,\quad j=1,\, 2.$$

\

\noindent {\it Proof of Proposition \ref{pr:ac1}.}
 $(i)$ Note that
$$
 \tex{
 \mathcal{J}''(\bu_1)[\bh ]^2 =I_1''(U_1)[ h_1]^2 + \| h_2\|_2^2
-\b \int_{\re^N} U_1^2 h_2^2,\qquad \forall\, \bh\in T_{\bu_1}\cN.
 }
$$
 Since $U_1$ is a minimum of $I_1$ on $\cN_1$ there exists
$c_1>0$ such that \be\label{eq:minimum} I_1''(U_1)[h]^2 \geq c_1
\|h\|_1^2,\qquad \forall \,h\in T_{U_1} \cN_1. \ee
   Taking $\bh\in
T_{\bu_1}\cN$, i.e., $h_1\in T_{U_1}\cN_1$ and using
\eqref{eq:minimum} we get
\begin{eqnarray*}
\mathcal{J}''(\bu_1)[\bh]^2&\geq & c_1
\| h_1\|_1^2+\|h_2\|_2^2
 \tex{
-\b \int_{\re^N}
 } U_1^2 h_2^2\\ &\geq&c_1 \| h_1\|_1^2+\|
h_2\|_2^2
 \tex{
 -\frac{\b}{S_1^2}\| h_2\|_2^2.
 }
\end{eqnarray*}
Therefore, if $\b<S_1^2$ there exists $c_2>0$ such that
$$
\mathcal{J}''(\bu_1)[\bh]^2\geq c_1 \| h_1\|_1^2+c_2\| h_2\|_2^2.
$$
Similarly,  if $\b<S_2^2$,  $\exists\;c'_j>0$, $j=1,\, 2$ such that
$$
\mathcal{J}''(\bu_2)[\bh]^2\geq c'_1 \| h_1\|_1^2+c'_2\| h_2\|_2^2.
$$

$(ii)$ Assume $\b>S_1^2$, then there exists $\overline{\psi}\in E$ such that
$$
 \tex{
S_1^2 < \frac{\|\overline{\psi}\|_2^2}{\int_{\re^N} U_1^2\overline{\psi}^2}<\b.
 }
$$
 In particular,  one has that $(0,\overline{\psi})\in
T_{\bu_1}\cN$. Therefore
  $$
   \tex{
\mathcal{J}''(\bu_1)[(0,\overline{\psi})]^2 =
\|\overline{\psi}\|_2^2 -\b \int_{\re^N}U_1^2\overline{\psi}^2<0.
 }
$$
 Similarly, if $\b>S_2^2$, there exists $(\overline{\phi},0)\in T_{\bu_2}\cN$
such that $\mathcal{J}''(\bu_2)[(\overline{\phi},0)]^2<0$.
\rule{2mm}{2mm}

\begin{remark}\label{rem:a1}
Note that Proposition \ref{pr:ac1} can be read as $\bu_j$ are strict local  minima, resp. saddle points, provided $\b<S_j^2$, resp. $\b>S_j^2$, $j=1,2$.
\end{remark}
Concerning  the PS condition (see Remark\;\ref{rem:PS}-$(2)$) we prove the following result.
\begin{lemma}\label{lem:PS}
Assume that $2\le N<8$, then $\mathcal{J}$ satisfies the PS condition constrained on $\cN$.
\end{lemma}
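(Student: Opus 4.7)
The plan is to take a sequence $\{\bu_n\}\subset \cN$ with $\mathcal{J}(\bu_n)\to c$ and $\mathcal{J}|_{\cN}'(\bu_n)\to 0$, and show that, up to a subsequence, $\bu_n$ converges strongly in $\mathbb{H}$ to some $\bu\in \cN$. Boundedness comes essentially for free from the Nehari representation: by \eqref{eq:M3}, $\|\bu_n\|^2=4\mathcal{J}(\bu_n)$ is bounded, so up to extracting a subsequence $\bu_n\rightharpoonup \bu$ in $\mathbb{H}$.

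The next step is to exploit the compact embedding. Since we work in the radial subspace $\mathbb{H}$ and $2\le N<8$, we have $4<p^*$, so the embedding $H\hookrightarrow L^4(\re^N)$ is compact by Remark \ref{rem:compact}. Therefore $u_{j,n}\to u_j$ strongly in $L^4(\re^N)$ for $j=1,2$, which gives $F(\bu_n)\to F(\bu)$ and $G(\bu_n)\to G(\bu)$, and, more importantly, convergence of the derivatives $F'(\bu_n)\to F'(\bu)$ and $G'(\bu_n)\to G'(\bu)$ in the dual of $\mathbb{H}$ (via Hölder and Sobolev).

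Now I would upgrade the constrained critical condition to the unconstrained one. Since $\cN$ is a smooth codimension-one manifold (see the proof of Proposition \ref{pr:ac}), there exist Lagrange multipliers $\omega_n\in\re$ with
\[
\mathcal{J}'(\bu_n)=\omega_n \Psi'(\bu_n)+o(1)\quad\text{in }\mathbb{H}^*.
\]
Testing against $\bu_n$ itself and using $\Psi(\bu_n)=(\mathcal{J}'(\bu_n)\mid\bu_n)=0$ together with \eqref{eq:M2}, we obtain $0=\omega_n(-2\|\bu_n\|^2)+o(\|\bu_n\|)$; thanks to the uniform lower bound \eqref{eq:M1}, this forces $\omega_n\to 0$, and consequently $\mathcal{J}'(\bu_n)\to 0$ in $\mathbb{H}^*$.

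To conclude with strong convergence, I would write
\[
\|\bu_n-\bu\|^2=(\mathcal{J}'(\bu_n)-\mathcal{J}'(\bu)\mid \bu_n-\bu)+\bigl(4F'(\bu_n)-4F'(\bu)+4\b G'(\bu_n)-4\b G'(\bu)\mid \bu_n-\bu\bigr).
\]
The first bracket tends to zero because $\mathcal{J}'(\bu_n)\to 0$, $\mathcal{J}'(\bu)$ is fixed, and $\bu_n-\bu\rightharpoonup 0$; the second tends to zero by the strong $L^4$-convergence established above. Hence $\bu_n\to \bu$ in $\mathbb{H}$. Finally, the Nehari identity \eqref{eq:F} passes to the limit, so $\Psi(\bu)=0$, and \eqref{eq:M1} rules out $\bu=\bo$; thus $\bu\in\cN$, completing the PS condition. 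The main technical point is the Lagrange multiplier step, where one must use both \eqref{eq:M1} and \eqref{eq:M2} to rule out a ``degenerate'' multiplier, together with the dimension restriction $N<8$ which is what makes the quartic nonlinearities compact on $\mathbb{H}$.
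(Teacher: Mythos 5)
Your proposal is correct and follows essentially the same route as the paper: boundedness via the Nehari identity $\mathcal{J}=\tfrac14\|\cdot\|^2$, compactness of $F$ and $G$ on the radial space, the Lagrange-multiplier argument using \eqref{eq:M1}--\eqref{eq:M2} to force $\omega_n\to 0$ and hence $\mathcal{J}'(\bu_n)\to 0$, and then strong convergence (you merely spell out the final step, which the paper leaves terse). The only blemish is the spurious factor $4$ in front of $F'$ and $G'$ in your displayed identity (the correct Fr\'echet derivatives carry no such factor), but this is harmless since both brackets tend to zero regardless.
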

\begin{pf}
Let $\bu_n\in \cN$ be a sequence such that $\mathcal{J}(\bu_n)\to c>0$, as $n\to \infty$.
From \eqref{eq:M3}  it follows that
$\bu_n$ is bounded and,  without relabeling, we can assume that $\bu_n\rightharpoonup \bu_0$.
 Since $H$ is compactly embedded into $L^4(\re^N)$ for $2\le N<8$ (see Remark\;\ref{rem:compact}), we infer that
 $$F(\bu_n)+\b G(\bu_n)\to F(\bu_0)+  \b G(\bu_0).$$
Moreover using \eqref{eq:F} jointly with \eqref{eq:M1}, one has that
$$\exists\,c>0\quad \hbox{such that}\quad F(\bu_n)+\b G(\bu_n)\geq c,\quad \hbox{and then}\quad \bu_0\not= 0.$$
Letting
  $$\n_{\cN}\mathcal{J}(\bu)=\mathcal{J}'(\bu)-\o \Psi'(\bu),$$   denote the constrained gradient of $\mathcal{J}$ on $\cN$, with $\o\in \re$. Suppose that
 $$\n_{\cN}\mathcal{J}(\bu_n)\to 0.$$ Taking the scalar product with $\bu_n$ and recalling that
 $(\mathcal{J}'(\bu_n)\mid \bu_n)=\Psi(\bu_n)=0$, we find that
 $$\o_n (\Psi'(\bu_n)\mid \bu_n)\to 0,$$
 and this,
jointly with \eqref{eq:M2}, implies  that $\o_n\to 0$. Moreover, taking into account that
$$\|\Psi'(\bu_n)\|\leq c_1<\infty,$$
we deduce that $\mathcal{J}'(\bu_n)\to 0$. To finish the proof, since $\lim (\mathcal{J}'(\bu_n)\mid \bu_0)= 0$ one can conclude that  $\bu_n\to \bu_0$ strongly.
\end{pf}

Note that Proposition \ref{pr:ac1} and Lemma \ref{lem:PS}
will be useful in the proof of the main result dealing with the existence of non-trivial solutions different from the semi-trivial solutions.
\begin{theorem}\label{th:main}
$(i)$ If   $\b<\Lambda$, then $\mathcal{J}$ has a Mountain-Pass  (MP) critical point  $\bu^*$ on $\cN$, and there holds
$\mathcal{J}(\bu^*)>\max\{\mathcal{J}(\bu_1) ,\mathcal{J}(\bu_2)\}$.

\noindent $(ii)$ If $\b>\Lambda'$ then $\mathcal{J}$ has a global minimum $\widetilde{\bu}$ on $\cN$, and there holds
$\mathcal{J}(\widetilde{\bu})<\min\{\mathcal{J}(\bu_1) ,\mathcal{J}(\bu_2)\}$.
\end{theorem}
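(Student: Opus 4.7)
The plan is to treat the two parts separately, applying classical variational tools to the $C^1$ functional $\mathcal{J}$ restricted to the smooth codimension-one manifold $\cN$. In both cases the key ingredients have already been prepared: Proposition \ref{pr:ac} ensures that constrained critical points on $\cN$ are genuine critical points of $\mathcal{J}$ in $\mathbb{H}$, Proposition \ref{pr:ac1} describes the local geometry near $\bu_1, \bu_2$, and Lemma \ref{lem:PS} supplies the Palais--Smale condition on $\cN$. Throughout, I will use that the map $\bv\mapsto t(\bv)\bv$, with $t(\bv)>0$ determined by $t(\bv)\bv\in\cN$, defines a continuous radial retraction from $\mathbb{H}\setminus\{\bo\}$ onto $\cN$ (this is shown in the proof of Proposition \ref{pr:ac}), so in particular $\cN$ is path-connected.

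\textbf{Part $(ii)$.} When $\b>\Lambda'$, Proposition \ref{pr:ac1}$(ii)$ already provides the strict inequality $\inf_{\cN}\mathcal{J}<\min\{\mathcal{J}(\bu_1),\mathcal{J}(\bu_2)\}$, and \eqref{eq:bound} shows the infimum is a positive finite number $c$. I would pick a minimizing sequence $\bu_n\in\cN$ with $\mathcal{J}(\bu_n)\to c$ and, invoking Ekeland's variational principle, assume in addition that the constrained gradient $\n_{\cN}\mathcal{J}(\bu_n)\to 0$. The PS condition from Lemma \ref{lem:PS} then yields a strongly convergent subsequence $\bu_n\to\widetilde{\bu}$, with $\widetilde{\bu}\in\cN$, $\mathcal{J}(\widetilde{\bu})=c$, and $\widetilde{\bu}$ is a constrained critical point on $\cN$, hence a critical point of $\mathcal{J}$ in $\mathbb{H}$ by Proposition \ref{pr:ac}. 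The strict inequality immediately implies $\widetilde{\bu}\neq\bu_1,\bu_2$.

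\textbf{Part $(i)$.} When $\b<\Lambda$, Proposition \ref{pr:ac1}$(i)$ says both $\bu_j$ are strict local minima of $\mathcal{J}$ on $\cN$. I would extract from its proof an $r>0$ and $\delta>0$ such that
\[
\mathcal{J}(\bu)\ge \mathcal{J}(\bu_j)+\delta,\qquad \bu\in\cN,\;\|\bu-\bu_j\|=r,\;j=1,2,
\]
with $r$ small enough so the two closed $\cN$-balls around $\bu_1$ and $\bu_2$ are disjoint. Set
\[
\Gamma=\{\gamma\in C([0,1],\cN):\gamma(0)=\bu_1,\;\gamma(1)=\bu_2\},
\]
which is non-empty by path-connectedness of $\cN$, and define
\[
c^{*}=\inf_{\gamma\in\Gamma}\max_{t\in[0,1]}\mathcal{J}(\gamma(t)).
\]
Every $\gamma\in\Gamma$ must cross both spheres, so $c^{*}\ge \max\{\mathcal{J}(\bu_1),\mathcal{J}(\bu_2)\}+\delta$. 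The Mountain Pass Theorem on the smooth manifold $\cN$, together with the PS condition from Lemma \ref{lem:PS}, then produces a critical point $\bu^{*}\in\cN$ of $\mathcal{J}$ at level $c^{*}$, and Proposition \ref{pr:ac} promotes it to a critical point of $\mathcal{J}$ in $\mathbb{H}$.

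The routine steps are standard deformation-lemma arguments on the constrained manifold; the main obstacle is the topological bookkeeping behind the MP geometry: one must ensure the local minimum property is genuinely uniform on a $\cN$-sphere around each $\bu_j$ (this follows from the quadratic lower bound on $\mathcal{J}''(\bu_j)$ obtained in the proof of Proposition \ref{pr:ac1}, extended to a neighbourhood via $C^2$-regularity) and that the radial projection behaves well so that paths in $\mathbb{H}\setminus\{\bo\}$ from $\bu_1$ to $\bu_2$ can be projected into $\Gamma$. Once these details are settled, everything else reduces to applying the MP theorem and the PS property already in hand.
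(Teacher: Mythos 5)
Your proposal is correct and follows essentially the same route as the paper: Mountain Pass on $\cN$ between the two strict local minima $\bu_1,\bu_2$ for part $(i)$, minimization on $\cN$ for part $(ii)$, with Lemma \ref{lem:PS} supplying compactness and Proposition \ref{pr:ac} lifting constrained critical points to free ones (your explicit sphere/$\delta$ argument for $c^*>\max\{\mathcal{J}(\bu_1),\mathcal{J}(\bu_2)\}$ just makes precise what the paper attributes to ``the MPT again''). The only substantive item the paper includes that you omit is the final step of part $(ii)$ showing that $\widetilde{\bu}$ has \emph{both} components non-trivial (if one component vanished, the other would solve the scalar equation with energy below that of its ground state $U_j$, a contradiction); this goes beyond the literal statement but is the point emphasized in Remark \ref{rem:a2}.
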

\begin{pf}
$(i)$  Due to Proposition \ref{pr:ac1}-$(i)$, $\bu_j$ ($j=1,\, 2$)  are strict local minima of $\mathcal{J}$ on $\cN$. This fact allows us to
apply the Mountain Pass Theorem (MPT for short, see \cite{ar}) to $\mathcal{J}$ on $\cN$, yielding a PS sequence $\{\bv_n\}\subset\cN$ with
$$\mathcal{J} (\bv_n)\to c,\quad \hbox{where}\quad
c=\inf_{\Gamma}\max_{0\le t\le 1}\Phi (\g (t)),
$$
and $$\G=\{ \g:[0,1]\to\cN\mbox{ continuous }|\: \g(0)=\bu_1,\: \g(1)=\bu_2,\}.$$
By Lemma \ref{lem:PS}, we find a convergent subsequence (if necessary without relabeling)
$$\bv_n\to \bu^*,\quad \hbox{strongly in} \quad \mathbb{H},$$
so that  $\bu^*$ is a critical point of $\mathcal{J}$ and, hence, thanks to Proposition \ref{pr:ac} $\bu^*\in\cN$.
Moreover, by  the MPT again, it also follows that
$$\mathcal{J}(\bu^*)>\max\{\mathcal{J}(\bu_1) ,\mathcal{J}(\bu_2)\}.$$

$(ii)$ Now, due to Lemma \ref{lem:PS}, the $\inf_{\cN}\mathcal{J}$ is achieved at some $\widetilde{\bu}\in\cN$.
Moreover, if $\b>\Lambda'$,  by Proposition \ref{pr:ac1}-$(ii)$ we get that
$$\mathcal{J}(\widetilde{\bu})<\min\{\mathcal{J}(\bu_1) ,\mathcal{J}(\bu_2)\}.$$
Note that if $\widetilde{\bu}$ had for example, the second component zero, then clearly
$\widetilde{\bu}=(\widetilde{u}_1,0)$ with $\widetilde{u}_1\not=0$, but in that case, $\widetilde{u}_1$
would be a non-trivial solution of the equation $\D^2 u +  \l_1u = \mu_1 u^3$ with energy strictly less
than the energy of $u_1$ which is a ground state of the previous equation (see Remark \ref{rem:ground-eq}),
and this is a contradiction. Arguing in a similar way we conclude that the first component of
$\widetilde{\bu}$ is non-trivial too.
\end{pf}
\begin{remarks}
\begin{enumerate}\label{rem:a2}
\item Note that statement $(i)$ of  Theorem \ref{th:main} is weaker than one could expect, since although $\bu^*\not =\bu_j$, $j=1,2$,
it does not exclude that $\bu^*$ might be a solution of \eqref{s1} with one component zero. This is not the case in statement $(ii)$ of Theorem \ref{th:main} as we have proved.
\item We observe as in \cite{ac2}, in order to prove the preceding theorem, it would be enough that only one among $\bu_j$
is a minimum or a saddle. For example, if
$$\mathcal{J}(\bu_1)<\mathcal{J}(\bu_2),$$
to prove $(i)$
 it suffices that $\bu_2$ is a minimum. According to Remark \ref{rem:a1}, this is the case
 provided $\b<S_2^2$. Unfortunately,  a straight calculation shows that
$$ \mbox{if } \mathcal{J}(\bu_1)<\mathcal{J}(\bu_2) \mbox{ then } S_2^2<S_1^2.$$
Hence $\bu_1$ is a minimum as well. The same remark holds for the case $(ii)$.
\end{enumerate}\end{remarks}

\section{Multiplicity results for the system \eqref{s1}}\label{Sec4}

We analyse the multiplicity results in relation to the bi-harmonic
nonlinear Schr\"{o}dinger system \eqref{s1}. This analysis could
provide us with some answers to the discussion made at the end of
last section in which we did not exclude the possibility of having
$\bu^*$ as a solution of \eqref{s1} and with one nil component.

To this end we perform an analysis that will provide us with an estimation in the number of solutions for \eqref{s1}.
Thus, we will first show an application of the so-called {\em fibering method} used in Section\;\ref{Sec2}
for the one single fourth order Schr\"{o}dinger equation \eqref{radnls}.

\vspace{0.2cm}

\noindent\underline{Fibering Method.} Consider the following Euler
functional associated with \eqref{s1}:
$$
    \tex{ \mathcal{J}_\l(\bu)=\mathcal{J}_\l(u_{1},u_{2})= I_1(u_{1})+I_2(u_{2}) - \b\, G(u_{1},u_{2}),
    }
$$
defined by \eqref{mainfun0} such that the solutions of \eqref{s1} can be obtained as
critical  points of the $\mc{C}^1$ functional \eqref{mainfun0}. To simplify the analysis we again write $\mc{J}\equiv \mc{J}_{\l}$.

Subsequently, we split the functions $u_1,u_2\in W^{2,2}(\re^N)$ as follows
\begin{equation}
\label{split2}
    \tex{ u_1(x)=r v_1(x),\quad u_2(x)=r v_1(x),}
\end{equation}
 where $r\in \re$, such that $r\geq 0$ (we shall shortly see  this) and $\bv=(v_1,v_2)\in \mathbb{H}$,
 to obtain the so-called {\em fibering maps}
\begin{align*}
    \tex{\Phi_{\bv}\,:\,} & \tex{ \re \rightarrow \re,}\\  &
    \tex{r \rightarrow \mc{J}(r\bv).}
\end{align*}
Then we get
\begin{equation}
\label{funsplit2}
\Phi_{\bv}(r) = \mc{J}(r\bv)= \frac{r^2}{2} \|\bv\|^2 -r^4 F(\bv)-r^4\b G(\bv).
\end{equation}
Thus,  \eqref{funsplit2} defines the current fibering maps.
 Note that, if $\bu =(u_1,u_2)\in \mathbb{H}$ is a critical point of
$\mc{J}(\bu)$, then thanks to \eqref{split2},
\be
\label{mapfib}
 \tex{
 D_{\bu} \mc{J}(r \bv)\bv= \frac{\partial \mc{J}(r \bv)}{\partial r}=0,\quad \hbox{i.e.}\quad  \frac{\partial \Phi_{\bv}(r) }{\partial r}=0.
  }
\ee
In other words, $D_{\bu} \mc{J}(r \bv)\bv=(D_{\bu}\mc{J}(r \bv)\mid\bv )=0$. Hence, the calculation of
that derivative yields
$$
 \Phi_{\bv}'(r) = r\| \bv\|^2 -r^3 (4F(\bv)+4\b G(\bv)).
 $$

Moreover, since we are looking for non-trivial solutions
(critical points), i.e., $\bu\neq (0,0)$, with at least one of the components different from zero,  we have to assume that $r\neq
0$.
Therefore, since we are looking for $r\neq 0$ such that  $\Phi_{\bv}'(r)=0$ and according to \eqref{mapfib} we actually have
\begin{equation}
\label{varivu2}
\| \bv\|^2 -r^2 (4F(\bv)+4\b G(\bv))=0,
\end{equation}
and, in order to have non-trivial solutions (for a certain $\b$ to be shown below)
$$F(\bv)+\b G(\bv)\neq 0,$$
hence,
\begin{equation}
\label{rex2}
   r^2= \frac{\| \bv\|^2}{4F(\bv)+4\b G(\bv)}>0.
\end{equation}
Now, calculating $r$ from \eqref{rex2} (values of the scalar
functional $r=r(\bv)$, where those critical points are reached) and
substituting it into \eqref{funsplit2} gives the following
functional:
\begin{equation}
\label{spfunc2}
    \mc{J}(r(\bv)\bv)=\frac{1}{16} \frac{\|\bv\|^4}{F(\bv)+\b G(\bv)}.
\end{equation}
Note that applying a similar argument to the one performed in \eqref{posr} through the use of the Sobolev's
embedding \eqref{continuous}, \eqref{sobo12} we actually have that $r>0$ and the positivity of the fibering maps as well, i.e.
$$\mc{J}(r(\bv)\bv) \ge C,\quad  \mbox{for some positive constant $C=C(\l_j,\mu_j,N)$.}$$
 In fact we already had that since previously, by \eqref{eq:bound}, we obtained that
$$\tex{\mc{J}{|_{_{\mc{N}}}} >C>0.}$$
Hence, this result means that the fibering maps never cut through the axis although, and as we shall see
below, due to the Lusternik-Schnirel'man analysis
they can have infinitely many critical points.

Thus, we have the following result.

\begin{lemma}
\label{rwell}
$r=r(\bv)$ is
well-defined and consequently the fibering map \eqref{funsplit2}
possesses a unique point of monotonicity change in the case
\be
\label{poss2}
 4(F(\bv)+\b G(\bv))=\tex{ \mu_1 \int_{\mathbb{R}^N} v_1^{4}\,{\mathrm d} x +
    \mu_2 \int_{\mathbb{R}^N} v_2^{4}\,{\mathrm d} x +    2\beta \int_{\mathbb{R}^N} |v_1|^2|v_2|^2 \,{\mathrm d} x>0.}
    \ee
such that $\l_j>0$, with $j=1,2$, and $\b >-\sqrt{\mu_1\mu_2}$.
\end{lemma}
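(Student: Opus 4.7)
The plan is to split the proof into an algebraic part about the shape of $\Phi_{\bv}$ and an analytic part about the positivity of $F(\bv)+\b G(\bv)$. The first observation is that \eqref{funsplit2} takes the form $\Phi_{\bv}(r)=c_1 r^2-c_2 r^4$ with $c_1=\tfrac12\|\bv\|^2>0$ and $c_2=F(\bv)+\b G(\bv)$. So the whole monotonicity analysis from the scalar lemma in Section~\ref{Sec2} carries over unchanged as soon as $c_2>0$: solving $\Phi_{\bv}'(r)=r\|\bv\|^2-4r^3(F(\bv)+\b G(\bv))=0$ produces exactly the unique positive root in \eqref{rex2}, and a direct computation yields $\Phi_{\bv}''(r(\bv))=-2\|\bv\|^2<0$. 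Together with $\Phi_{\bv}(0)=0$ and $\Phi_{\bv}(r)\to -\infty$ as $r\to\infty$, this makes $r(\bv)$ a strict local and hence global maximum, which is precisely the unique point of monotonicity change.

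The heart of the argument is therefore to verify the strict positivity
\[
4\bigl(F(\bv)+\b G(\bv)\bigr)=\mu_1\int_{\ren} v_1^{4}+\mu_2\int_{\ren} v_2^{4}+2\b\int_{\ren} v_1^{2}v_2^{2}>0
\]
for every $\bv\neq\bo$ under the standing hypothesis $\b>-\sqrt{\mu_1\mu_2}$. For $\b\ge 0$ the bound is immediate since each summand is non-negative and at least one is strictly positive. For $-\sqrt{\mu_1\mu_2}<\b<0$, I would bound the cross term by Cauchy--Schwarz,
\[
\int_{\ren} v_1^{2}v_2^{2}\leq \Bigl(\int_{\ren} v_1^{4}\Bigr)^{1/2}\Bigl(\int_{\ren} v_2^{4}\Bigr)^{1/2}=:ab,
\]
noting that multiplying by the negative factor $2\b$ reverses the direction of the inequality and yields
\[
4\bigl(F(\bv)+\b G(\bv)\bigr)\ge \mu_1 a^{2}+2\b\,ab+\mu_2 b^{2}.
\]
The right-hand side is a quadratic form in $(a,b)$ with discriminant $\b^2-\mu_1\mu_2<0$ precisely under our hypothesis, hence it is positive definite and vanishes only at $a=b=0$.

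Finally, I would dispose of the degenerate cases. If $a=b=0$ then $v_1\equiv v_2\equiv 0$, contradicting $\bv\neq\bo$. If exactly one of them vanishes, say $b=0$, then $G(\bv)=0$ and the quantity reduces to $\mu_1\int v_1^{4}>0$ since $\mu_1>0$ and $v_1\not\equiv 0$; the other degenerate case is symmetric. The main obstacle I anticipate is not technical but purely book-keeping: one must be careful to reverse the Cauchy--Schwarz inequality correctly when $\b$ changes sign, and to treat the degenerate cases separately from the positive-definite quadratic form argument. Once this positivity is established, the well-definedness of $r(\bv)$ via \eqref{rex2} and the uniqueness of the monotonicity change both follow immediately from the algebraic analysis of the first paragraph.
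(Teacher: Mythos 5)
Your proof is correct, and it reaches the same threshold $\b>-\sqrt{\mu_1\mu_2}$ by what is at heart the same elementary estimate, but packaged differently. The paper controls the cross term with the pointwise Young inequality $\sqrt{\mu_1\mu_2}\,v_1^2v_2^2\le\tfrac12(\mu_1v_1^4+\mu_2v_2^4)$, integrates, and concludes that $4(F(\bv)+\b G(\bv))\ge 2(\sqrt{\mu_1\mu_2}+\b)\int_{\ren}v_1^2v_2^2$; you instead apply the integral Cauchy--Schwarz inequality and reduce positivity to the positive definiteness of the quadratic form $\mu_1a^2+2\b ab+\mu_2b^2$ with $a^2=\int v_1^4$, $b^2=\int v_2^4$, whose discriminant condition $\b^2<\mu_1\mu_2$ reproduces the same hypothesis. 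Your version has two small advantages: it makes the case split in the sign of $\b$ explicit (Cauchy--Schwarz must be reversed by the negative factor $2\b$, which you note), and it closes a gap the paper leaves open --- the paper's final lower bound degenerates to $\ge 0$ when $\int_{\ren}v_1^2v_2^2=0$ (e.g.\ one component identically zero), whereas you treat that case separately and observe the expression reduces to $\mu_j\int v_j^4>0$. Your opening paragraph on the shape $\Phi_{\bv}(r)=c_1r^2-c_2r^4$ and the computation $\Phi_{\bv}''(r(\bv))=-2\|\bv\|^2<0$ is also correct and supplies the ``unique point of monotonicity change'' claim, which the paper's proof of this lemma leaves implicit by analogy with the scalar case in Section~\ref{Sec2}.
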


\begin{proof}
The positivity of the parameters $\l_j$ comes directly from the norms \eqref{embso} under which we are stating the
problem.
Furthermore, due to Young's inequality we find that
$$\tex{ \sqrt{\mu_1\mu_2} \int_{\mathbb{R}^N} |v_1|^2|v_2|^2  \leq \frac{1}{2}\left(\mu_1 \int_{\mathbb{R}^N} v_1^{4} +
    \mu_2 \int_{\mathbb{R}^N} v_2^{4}\right).}$$
Thus,
  $$\tex{\mu_1 \int_{\mathbb{R}^N} v_1^{4} +
    \mu_2 \int_{\mathbb{R}^N} v_2^{4} +   2\beta\int_{\mathbb{R}^N} |v_1|^2|v_2|^2  \geq 2(\sqrt{\mu_1\mu_2} +\beta) \int_{\mathbb{R}^N} |v_1|^2|v_2|^2,}$$
so that, it will be positive if and only if
$$\tex{\b >-\sqrt{\mu_1\mu_2},}$$
and therefore, proving the conditions in \eqref{poss2}.
\end{proof}

\begin{remark}
Note that due to Lemma\;\ref{rwell} the only considered possibility in order to have $r=r(v)$ well-defined will be \eqref{poss2}. Hence, the possible
condition when both terms in \eqref{rex2} are negative is neglected.
\end{remark}

Therefore, assuming that $\bv_c=(v_{1,c},v_{2,c})$ is a critical point of
$\mc{J}(r(\bv_c)\bv_c)$, thanks to the transformation carried out above, we again have that
a critical point $\bu_c=(u_{1,c},u_{2,c}) \in \mathbb{H}$, with $u_{j,c} \neq 0$ for $j=1$ or $j=2$, of
$\mc{J}$ is generated by $\bv_c$ through the expression
 $$
\bu_c=r_c\bv_c,
$$
 with $r_c$ defined by \eqref{rex2}.
Moreover, the different critical points of those fibering maps will
provide us with the critical points of the functional $\mc{J}(r(\bv)\bv)$ denoted by
\eqref{spfunc2}, and, hence, by construction, of the functional
$\mc{J}$ given by \eqref{mainfun}.

\vspace{0.2cm}

\noindent\underline{Lusternik-Schnirel'man analysis}. Similarly as performed for one single equation we apply
the topological method due to Lusternik-Schnirel'man in order to have an estimation of the number of solutions.

In this case the functional subset is denoted by
\begin{equation}
 \label{S0}
    \tex{ \mc{S}_{0,\l_1,\l_2}=\{\bu\in \mathbb{H}\,:\, \|\bu\|^2=1\}.}
\end{equation}
Again the critical points of the functional $\mc{J}$ are directly related with the
category $\g(\mc{S}_{0,\l_1,\l_2})$ of that functional subset \eqref{S0}, providing us with the number of critical points that
belong to the subset $\mc{S}_{0,\l_1,\l_2}$. Indeed,
the category $\g(\mc{S}_{0,\l_1,\l_2})$ is given by the number of eigenvalues
(with multiplicities) of the corresponding linear eigenvalue
problem satisfying:
 \be
 \label{rho12}
  \tex{
\g(\mc{S}_{0,\l_1,\l_2})= \sharp \{\ell_{\b,\l_j} > 0
\}, \quad \mbox{where}
}
\ee
\be
\label{rho22}
 \D^2 \psi_\b=(\ell_{\b,\l_j}-\l_j) \psi_\b, \quad \hbox{in}\quad \re^N \quad \hbox{and}
  \lim_{|x|\to \infty} \psi_\b(x)=0, \quad \hbox{with}\quad j=1,2.
\ee
For the particular case of the system \eqref{s1} we are able to ascertain
the existence of a countable family of solutions for the functional $\mc{J}$, but not to get any further information.
\begin{theorem}
$\mc{J}$ possesses an unbounded sequence
(family) of critical points.
\end{theorem}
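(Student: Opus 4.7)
The plan is to mimic the proof of Theorem~\ref{th21}, now applied to the system functional $\mc{J}$ on the sphere $\mc{S}_{0,\l_1,\l_2}\subset\mathbb{H}$. The key starting observation is that every nonlinear term in $F$, $G$ and in $\|\cdot\|^2$ is a homogeneous polynomial of even degree, so the reduced functional $\mc{J}(r(\bu)\bu)$ arising from the fibering formula \eqref{spfunc2} is invariant under the antipodal action $\bu\mapsto -\bu$, which is the essential prerequisite for Lusternik-Schnirel'man (L-S) theory via the Krasnoselskii genus.

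Following \eqref{cat}-\eqref{setb}, I would introduce the minimax values
\[c_\b \;:=\; \inf_{\mc{A}\in \mc{A}_\b}\;\sup_{\bu\in \mc{A}}\, \mc{J}(r(\bu)\bu),\]
where $\mc{A}_\b$ is the class of compact symmetric subsets $\mc{A}\subset\mc{S}_{0,\l_1,\l_2}$ with $\gamma(\mc{A})\geq\b$. Two ingredients are then routine: boundedness from below of the reduced functional follows from \eqref{spfunc2} combined with \eqref{eq:bound} (recall that $r(\bu)>0$ by Lemma~\ref{rwell} under the standing assumption $\b>-\sqrt{\mu_1\mu_2}$), and the PS condition on $\mc{S}_{0,\l_1,\l_2}$ transfers from Lemma~\ref{lem:PS} via the bi-continuous fibering correspondence $\bv\mapsto r(\bv)\bv$ between $\mc{S}_{0,\l_1,\l_2}$ and $\cN$, together with the compact embedding $\mathbb{H}\hookrightarrow L^4(\re^N)\times L^4(\re^N)$ valid for $2\leq N<8$.

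The central step, and the one I expect to be the main technical obstacle, is proving $\gamma(\mc{S}_{0,\l_1,\l_2})=\infty$, since L-S theory will then yield an infinite, nondecreasing sequence of critical values $c_1\leq c_2\leq\cdots$. Here I would copy verbatim the approximation strategy of Theorem~\ref{th21}: truncate each linearized spectral problem \eqref{rho22} to a ball $B_R$ with homogeneous Dirichlet conditions, rescale via $R^{1/4}r=y$ as in \eqref{D2} to reduce to a fixed self-adjoint problem on $B_1$ with a known unbounded discrete spectrum $\{\alpha_k\}$, and conclude from $\ell_{k,\l_j}(R)-\l_j=\alpha_k R^{-4}\to 0$ that for every prescribed integer $m$ one can find $R$ large enough so that $\mc{S}_{0,\l_1,\l_2}$ contains a symmetric set of genus at least $m$ built out of the first $m$ Dirichlet eigenfunctions (embedded in the two components of $\mathbb{H}$). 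Letting $R\to\infty$ yields $\gamma(\mc{S}_{0,\l_1,\l_2})=\infty$ in the whole space.

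Finally, the classical L-S principle for even $\mc{C}^1$ functionals on symmetric smooth manifolds, combined with the PS property and the lower bound on $\mc{J}$, guarantees that each $c_\b$ is a critical value and that the sequence $\{c_\b\}$ is unbounded; otherwise, a bounded subsequence would, by PS compactness, accumulate on a compact set of critical points at a single level, contradicting the infinite genus. Reversing the fibering transformation $\bv_\b\mapsto r(\bv_\b)\bv_\b$ then produces the desired unbounded family of critical points of $\mc{J}$ on $\mathbb{H}$, each a non-trivial solution of the system \eqref{s1}.
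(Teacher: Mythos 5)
Your proposal is correct and follows essentially the same route as the paper: the paper itself only records (in the remark following the theorem) that the argument of Theorem~\ref{th21} carries over verbatim because the linear part of the system decouples into two separate eigenvalue problems, and your write-up simply fleshes out that same Lusternik--Schnirel'man/genus argument (evenness, lower bound via \eqref{spfunc2} and \eqref{eq:bound}, PS from Lemma~\ref{lem:PS}, and the ball-approximation estimate of the category of $\mc{S}_{0,\l_1,\l_2}$). No substantive difference in approach.
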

\begin{remark} The proof follows the same argument performed in Theorem\;\ref{th21}  since  the linear part of system \eqref{nls}
consists of just two separated eigenvalue equations. Note that the coupling terms for system \eqref{nls} are of non-linear type.
\end{remark}

\end{document}